\newcommand{\comm}[1]{{\color[rgb]{0.0, 0.5, 0.0} #1}}
\newcommand{\new}[1]{{\color{blue} #1}}
\newtheorem{thm}{Theorem}[section]
\newtheorem{prop}[thm]{Proposition}
\newtheorem{cor}[thm]{Corollary}
\newtheorem{lem}[thm]{Lemma}
\theoremstyle{definition}
\newtheorem{chunk}[thm]{\hspace*{-1.065ex}\bf}
\newtheorem{eg}[thm]{Example}
\newtheorem{ques}[thm]{Question}
\newtheorem{rmk}[thm]{Remark}
\theoremstyle{remark}
\newtheorem*{claim*}{Claim}
\numberwithin{equation}{thm}
\newcommand{\NN}{\mathbb{N}}
\newcommand{\ZZ}{\mathbb{Z}}
\newcommand{\CC}{\mathbb{C}}
\newcommand{\iso}{\cong}
\newcommand{\lra}{\longrightarrow}
\newcommand{\ps}[1]{[\![#1]\!]}
\def\fm{\mathfrak{m}}
\def\fn{\mathfrak{n}}
\def\fp{\mathfrak{p}}
\def\fq{\mathfrak{q}}
\def\rH{\mathrm{H}}
\def\p{\mathrm{p}}
\def\q{\mathrm{q}}
\def\cx{\operatorname{\mathrm{cx}}}
\def\depth{\operatorname{\mathrm{depth}}}
\def\Tor{\operatorname{\mathrm{Tor}}}
\def\Spec{\operatorname{\mathrm{Spec}}}
\def\Ann{\operatorname{\mathrm{Ann}}}
\def\Ass{\operatorname{\mathrm{Ass}}}
\def\Supp{\operatorname{\mathrm{Supp}}}
\def\pd{\operatorname{\mathrm{pd}}}
\def\gd{\operatorname{\mathrm{G-dim}}}
\def\cid{\operatorname{\mathrm{CI-dim}}}
\def\CI{\operatorname{\mathrm{CI-dim}}}
\def\Gdim{\operatorname{\mathrm{G-dim}}}
\def\rcid{\operatorname{\mathrm{red-CI-dim}}}
\def\rpd{\operatorname{\mathrm{red-pd}}}
\def\rgd{\operatorname{\mathrm{red-G-dim}}}
\DeclareMathOperator{\Hdim}{\operatorname{\mathbf{\mathbb{I}}}}
\DeclareMathOperator{\rHdim}{\operatorname{\mathsf{red-\mathbb{I}}}}
\def\reddeg{\mathrm{reddeg^{\ast}}}
\def\Ext{\mathrm{Ext}}
\def\Hom{\mathrm{Hom}}
\def\rHom{\mathrm{R}\mathrm{Hom}}
\def\ltensor{\otimes^\mathbb{L}}
\newcommand{\rpm}{\raisebox{.2ex}{$\scriptstyle\pm$}}
\begin{document}

\title[Depth formula for modules of finite reducing projective dimension]{Depth formula for modules of finite \\reducing projective dimension}

\author{Olgur Celikbas}
\address{Olgur Celikbas\\ School of Mathematical and Data Sciences, West Virginia University, 
Morgantown, WV 26506 U.S.A}
\email{olgur.celikbas@math.wvu.edu}

\author{Toshinori Kobayashi}
\address{Toshinori Kobayashi \\ School of Science and Technology, Meiji University, 1-1-1 Higashi-Mita, Tama-ku, Kawasaki-shi, Kanagawa 214-8571, Japan}
\email{tkobayashi@meiji.ac.jp}

\author{Brian Laverty}
\address{Brian Laverty\\School of Mathematical and Data Sciences, West Virginia University,
	Morgantown, WV 26506 U.S.A}
\email{bml0016@mix.wvu.edu}

\author[Hiroki Matsui]{Hiroki Matsui}
\address{Hiroki Matsui\\ Department of Mathematical Sciences,
Faculty of Science and Technology,
Tokushima University,
2-1 Minamijosanjima-cho, Tokushima 770-8506, Japan}
\email{hmatsui@tokushima-u.ac.jp}

\subjclass[2020]{Primary 13D07; Secondary 13H10, 13D05, 13C12}
\keywords{Derived depth formula, complexes, complexity, complete intersection dimension, reducing projective dimension, tensor products of modules, torsion, vanishing of Ext and Tor} 
\thanks{Hiroki Matsui was partly supported by JSPS Grant-in-Aid for Early-Career Scientists 22K13894}
\thanks{Toshinori Kobayashi was partly supported by JSPS Grant-in-Aid for JSPS Fellows 21J00567}

\maketitle{}

\begin{abstract} We prove that the depth formula holds for two finitely generated Tor-independent modules over Cohen-Macaulay local rings if one of the modules considered has finite reducing projective dimension (for example, if it has finite projective dimension, or the ring is a complete intersection). This generalizes a result of Bergh-Jorgensen which shows that the depth formula holds for two finitely generated Tor-independent modules over Cohen-Macaulay local rings if one of the modules considered has reducible complexity and certain additional conditions hold.

Each module that has reducible complexity also has finite complexity and finite reducing projective dimension, but not necessarily vice versa. So a new advantage we have is that, unlike modules of reducible complexity, Betti numbers of modules of finite reducing projective dimension can grow exponentially. 
\end{abstract}

\section{Introduction}
Throughout $R$ denotes a commutative Noetherian local ring with unique maximal ideal $\fm$ and residue field $k$, and all $R$-modules are assumed to be finitely generated.

In his beautiful paper \cite{Aus} Auslander proved that, if $M$ and $N$ are $R$-modules such that $\pd_R(M)<\infty$, and $M$ and $N$ are Tor-independent, that is, if $\Tor_i^R(M,N)=0$ for all $i\geq 1$, then the following  holds:
\[
\depth_R(M) + \depth_R(N) = \depth(R)+ \depth_R(M\otimes_RN).
\]
Huneke-Wiegand dubbed this depth equality the \emph{depth formula}, and proved that the depth formula holds if $R$ is a complete intersection ring, and $M$ and $N$ are Tor-independent $R$-modules, regardless of whether $M$ or $N$ has finite projective dimension; see \cite[2.5]{HW1} and \cite[page 163]{HW2}. Subsequently, consequences of the depth formula, and different conditions that force this formula to hold were studied in several papers; see, for example, \cite{CeD2, Bounds, CJD}. In this paper we are concerned with results of Bergh-Jorgensen \cite{BJD} who studied the depth formula over Cohen-Macaulay rings for modules that have reducible complexity; see also Bergh \cite{Be} and Sadeghi \cite{Sa} for similar results over local rings that are not necessarily Cohen-Macaulay. The results of Bergh-Jorgensen in \cite{BJD} concerning the depth formula can be summarized as follows; see also \ref{thmBJ2} and Remark \ref{dfinfo}.

\begin{thm} (Bergh-Jorgensen; see \cite[3.3, 3.4, 3.5, 3.6]{BJD}) \label{thmintroBJ} Let $R$ be a Cohen-Macaulay local ring and let $M$ and $N$ be Tor-independent $R$-modules. Assume at least one of the following holds:
\begin{enumerate}[\rm(i)] 
\item $\depth_R(M) < \depth(R)$.
\item $N$ has finite Gorenstein dimension (for example, $R$ is Gorenstein).
\item $N$ has reducible complexity (for example, $N$ is periodic, or $R$ is a complete intersection).
\end{enumerate}
If $M$ has reducible complexity, then the depth formula holds for $M$ and $N$. \qed
\end{thm}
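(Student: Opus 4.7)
The plan is to induct on the complexity $\cx_R M$. For the base case $\cx_R M = 0$, we have $\pd_R M < \infty$, so Auslander's classical depth formula cited at the start of the introduction applies directly; none of the hypotheses (i)--(iii) are needed here.

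For the inductive step, assume $\cx_R M \geq 1$ and that the conclusion is known for every Tor-independent pair whose left term has strictly smaller complexity and reducible complexity. Unpacking the reducible complexity of $M$ produces a short exact sequence
\[
0 \to \Omega_R^t M \to L \to M \to 0
\]
with $t \geq 1$, where $L$ has reducible complexity, $\cx_R L < \cx_R M$, and $\depth_R L = \depth_R M$. The syzygy identity $\Tor_i^R(\Omega_R^t M,N) \cong \Tor_{i+t}^R(M,N) = 0$ for $i \geq 1$, fed into the long exact sequence of Tor arising from the displayed sequence, forces $\Tor_i^R(L,N) = 0$ for all $i \geq 1$. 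Tensoring with $N$ therefore remains exact and produces
\[
0 \to \Omega_R^t M \otimes_R N \to L \otimes_R N \to M \otimes_R N \to 0,
\]
and the inductive hypothesis applied to the Tor-independent pair $(L,N)$ delivers
\[
\depth_R(L \otimes_R N) = \depth_R L + \depth_R N - \depth(R) = \depth_R M + \depth_R N - \depth(R).
\]

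To conclude the depth formula for $(M,N)$ via the depth lemma applied to the tensored sequence, one must control $\depth_R(\Omega_R^t M \otimes_R N)$; the natural target is the depth formula for the pair $(\Omega_R^t M, N)$ itself. Since $\cx_R \Omega_R^t M = \cx_R M$, a naive induction on complexity does not close at this step, and this is where each of (i)--(iii) intervenes. Under (i), choose $t$ large enough that $\Omega_R^t M$ is maximal Cohen-Macaulay; the strict inequality $\depth_R M < \depth(R)$ then creates a definite depth gap in the tensored sequence, and the depth lemma returns the depth formula for $(M,N)$ without requiring precise information on $\depth_R(\Omega_R^t M \otimes_R N)$. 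Under (ii), the finite Gorenstein dimension of $N$ is inherited by syzygies, and Tor-vanishing together with Auslander--Bridger-type arguments yields the required depth inequality. Under (iii), $N$ itself has reducible complexity, so one performs a simultaneous lexicographic induction on $(\cx_R M, \cx_R N)$ using the parallel reduction sequence on the $N$ side.

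The principal obstacle is precisely this syzygy-closure gap: the depth formula for a Tor-independent pair does not automatically transfer to syzygies when the depth is maximal, so an external device is required to propagate the inductive hypothesis past the appearance of $\Omega_R^t M$. Each of (i)--(iii) supplies one such device --- a depth deficit on $M$, Gorenstein regularity on $N$, or a compatible induction on $N$'s complexity --- and in each case the final verification reduces to checking that the depth lemma, fed with the appropriate depth inequality, returns equality rather than mere inequality in the tensored sequence.
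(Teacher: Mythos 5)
The statement is cited from Bergh--Jorgensen with a \qed; the paper does not reprove it but instead establishes the stronger Theorem \ref{mainthm}, which removes the case split (i)--(iii) entirely. Your attempt at reconstructing the original argument has two concrete gaps. First, the reduction sequence coming from reducible complexity goes the other way: by \ref{rcx} it is $0 \to M \to L \to \Omega_R^{t}M \to 0$, with $M$ as the submodule and $\Omega_R^t M$ as the cokernel, not $0 \to \Omega_R^t M \to L \to M \to 0$ as you wrote. After tensoring with $N$, the corrected sequence $0 \to M\otimes_R N \to L\otimes_R N \to \Omega_R^t M\otimes_R N \to 0$ only yields the one-sided estimate $\depth_R(M\otimes_R N) \ge \min\{\depth_R(L\otimes_R N),\,\depth_R(\Omega_R^t M\otimes_R N)+1\}$ from the depth lemma, whereas the depth formula requires an equality, and nothing in your writeup converts the inequality into one.

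Second, you correctly locate the obstacle (the depth formula does not automatically pass to $\Omega_R^t M$), but the mechanism you offer for each of (i)--(iii) is not an argument. In (i) you "choose $t$ large enough" that $\Omega_R^t M$ is maximal Cohen-Macaulay, but the $n_i$ in the reducible-complexity data are fixed by the definition; inflating the syzygy degree while preserving reducible complexity needs an explicit lemma, which you do not invoke, and even then you never say what the "definite depth gap" buys. The device that actually closes this gap in the paper's proof of the more general result is the sharp identity $\depth_R(L\otimes_R N) = \min\{\depth_R(M\otimes_R N),\,\depth_R(N)\}$ (see \ref{rmkmain}), obtained by running the derived depth lemma in both directions around the triangle; conditions (i)--(iii) then only serve to decide which term of the min is achieved. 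Your proposal never proves such an identity, and your treatments of (ii) and (iii) ("Auslander--Bridger-type arguments", "simultaneous lexicographic induction") are stated as intentions rather than verifications --- in particular (iii) would need to explain what replaces the inductive hypothesis for the pair $(\Omega_R^t M, N)$ when both modules are being reduced at once.
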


In this paper we generalize Theorem \ref{thmintroBJ} and prove the following:

\begin{thm} \label{thmintro} Let $R$ be a Cohen-Macaulay local ring and let $M$ and $N$ be Tor-independent $R$-modules. If  $M$ has finite reducing projective dimension (for example, $M$ has reducible complexity), then the depth formula holds for $M$ and $N$. \qed 
\end{thm}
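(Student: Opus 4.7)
The plan is to induct on the length of a reducing sequence witnessing $\frpd_R(M) < \infty$. In the base case the sequence has length zero, so $\pd_R(M) < \infty$, and the depth formula for $(M, N)$ is Auslander's classical theorem recalled in the introduction.

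For the inductive step, the definition of reducing projective dimension should supply non-negative integers $a$ and $s$, a module $K$ admitting a strictly shorter reducing sequence, and a short exact sequence of the prototypical form
\[
0 \to M^{\oplus a} \to K \to \Omega_R^{s} M \to 0.
\]
My strategy is to transfer the depth formula from $(K, N)$ to $(M, N)$ along this sequence.

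First I would verify that Tor-independence persists through the reduction. Since $\Tor_i^R(\Omega_R^{s} M, N) \cong \Tor_{i+s}^R(M, N) = 0$ for all $i \geq 1$, the pair $(\Omega_R^{s} M, N)$ is Tor-independent, and the long exact sequence of $\Tor$ applied to the displayed SES forces $\Tor_i^R(K, N) = 0$ for all $i \geq 1$. Applying the inductive hypothesis to $(K, N)$ then yields
\[
\depth_R(K) + \depth_R(N) = \depth(R) + \depth_R(K \otimes_R N).
\]
Since $\Tor_1^R(\Omega_R^{s} M, N) = 0$, tensoring the original SES with $N$ produces an exact sequence
\[
0 \to (M \otimes_R N)^{\oplus a} \to K \otimes_R N \to \Omega_R^{s} M \otimes_R N \to 0.
\]

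Finally, I would apply the depth lemma to both short exact sequences, relating $\depth_R(M)$, $\depth_R(\Omega_R^{s} M)$, $\depth_R(K)$ on one side and $\depth_R(M \otimes_R N)$, $\depth_R(\Omega_R^{s} M \otimes_R N)$, $\depth_R(K \otimes_R N)$ on the other. The Cohen-Macaulay hypothesis on $R$, together with the standard syzygy comparison $\depth_R(\Omega_R^{s} M) \geq \min\{s + \depth_R(M),\, \depth(R)\}$, aligns these depth estimates. The main technical obstacle is the usual defect of the depth lemma: equalities are only guaranteed under strict inequalities, and the possibility that $s + \depth_R(M)$ exceeds $\depth(R)$ forces an awkward case analysis. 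I would expect to circumvent this by routing the argument through the derived depth formula
\[
\depth_R(M \ltensor_R N) = \depth_R(M) + \depth_R(N) - \depth(R),
\]
which absorbs the syzygy shift cleanly and applies to complexes built from the SES, and then to translate back to the ordinary depth formula using the Tor-independence already verified. This is precisely the move that should replace the reducible-complexity apparatus of Bergh-Jorgensen and allow one to drop the auxiliary hypotheses (i)-(iii) of Theorem~\ref{thmintroBJ}.
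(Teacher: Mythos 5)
Your setup is right and matches the paper's strategy in broad outline: induct on the length of a reducing sequence, transfer Tor-independence along the short exact sequence (with the minor caveat that the third term should be $\Omega_R^s M^{\oplus b}$ rather than a single copy), and try to transfer the derived depth formula from $(K,N)$ to $(M,N)$. The base case via Auslander/Foxby is correct. But the proposal stops exactly at the point where the real work begins. You correctly flag that the depth lemma only yields inequalities, and then assert that ``routing the argument through the derived depth formula'' will absorb the syzygy shift; this does not happen. What the derived depth lemma actually delivers along the exact triangle induced by $0 \to M^{\oplus a} \to K \to \Omega_R^n M^{\oplus b} \to 0$ is $\depth_R(K \ltensor_R N) = \min\{\depth_R(M \ltensor_R N),\, \depth_R(N)\}$ (the paper's \ref{rmkmain}). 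When $\depth_R(M) < \depth(R)$ the inductive hypothesis forces $\depth_R(K \ltensor_R N) < \depth_R(N)$, so the minimum is achieved at $\depth_R(M \ltensor_R N)$ and the transfer is immediate. But when $M$ is maximal Cohen--Macaulay, the same computation gives $\depth_R(K \ltensor_R N) = \depth_R(N)$, which only tells you $\depth_R(M \ltensor_R N) \ge \depth_R(N)$; the reverse inequality is not a consequence of the derived depth formula for $K$ and cannot be read off from the short exact sequence.

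This maximal Cohen--Macaulay case, with $\depth_R(N)$ small, is precisely where the Cohen--Macaulay hypothesis earns its keep, and the paper must run a second induction on $\dim(R)$ to handle it: if $\depth_R(N) \ge 1$, cut $R$, $M$, $N$ by a common non-zero-divisor (this preserves finiteness of $\q$ and of $\rpd$, and descent of the derived depth formula is the content of \ref{p27}); if $\depth_R(N) = 0$ and $\dim(R) \ge 2$, pass to $\Omega_R N$, which has depth $1$, apply the previous case to $(M,\Omega_R N)$, and then use the derived depth lemma on the triangle coming from $0 \to \Omega_R N \to F \to N \to 0$ to force $\depth_R(M \ltensor_R N) \le 0$. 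None of this machinery is visible in your proposal, and without it the argument does not close. So there is a genuine gap: the claim that the derived depth formula ``absorbs the syzygy shift cleanly'' is exactly the nontrivial half of the theorem, not a lemma you get to invoke.
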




%

Our main result, namely Theorem \ref{mainthm}, is more general than Theorem \ref{thmintro}; see also Proposition \ref{mainprop}. Theorem \ref{mainthm} establishes the derived version of the depth formula \cite{Foxby} over Cohen-Macaulay local rings provided that Tor modules eventually vanish and one of the modules considered has finite reducing projective dimension. The gist of the proof of Theorem \ref{mainthm} relies upon the usage of some derived category tools; see, for example, \ref{p27} and \ref{rmkmain}. 

We recall the definitions of reducible complexity and reducing projective dimension in the next section; see \ref{rcx} and \ref{rdim}. 
The definition of reducing invariants \cite{CA} was motivated by that of reducible complexity \cite{Be}, but these two definitions are different in nature: The latter deals with modules that have finite complexity, namely, modules that have polynomial growth on their Betti numbers.
Each module of finite reducing projective dimension has reducible complexity by definition, but the converse of this fact is not true in general. 
For example, if $R$ is a Cohen-Macaulay local ring with minimal multiplicity and $|k|=\infty$ (for example, $R=\CC[\![t^3, t^4, t^5]\!]$) and $M=\Omega^n_R(k)$ for some $n \geq 0$, then $M$ has finite reducing projective dimension, but $M$ does not have reducible complexity unless $R$ is a hypersurface; see \cite[1.2]{RD4}. There are many similar examples in the literature; see, for example, Examples \ref{ahlat1}, \ref{ahlat2}, and \ref{ahlat3}.

%

In section 2 we recall the necessary definitions and provide several examples. The proof of Theorem \ref{thmintro} is given in section 3, while section 5 contains the proofs of the auxiliary results we use to prove the theorem. Sections 4 is devoted to some applications of our results; see, for example, Corollary \ref{cor5.3} and Proposition \ref{rmk5.4}. In Section 4, besides other applications, we consider a beautiful formula of Jorgensen \cite{JAB} which extends the classical Auslander-Buchsbaum formula; see \ref{JDF}. Over Cohen-Macaulay rings, we improve Jorgensen's formula and prove the following; see Theorem \ref{thm5.5}.

\begin{thm} \label{thm5.5} \label{propnewintro} Let $R$ be a Cohen-Macaulay local ring and let $M$ and $N$ be nonzero $R$-modules. If $\Tor_i^R(M,N)=0$ for all $i\gg 0$ and $\rcid_R(M)<\infty$, then 
\begin{align*}
\sup\{i:\Tor_i^R(M,N)\neq 0\} & = \sup \Bigl\{  \depth(R_{\fp})-\depth_{R_\fp}(M_\fp) -\depth_{R_\fp}(N_\fp) \mid \fp \in \Spec(R)  \Bigr\}.
\end{align*} 
\end{thm}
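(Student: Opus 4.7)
The plan is to obtain the identity by applying the derived depth formula (Theorem~\ref{mainthm}) at every prime of $R$ and reading off the derived depth of $M_\fp \ltensor_{R_\fp} N_\fp$ from the top non-vanishing Tor via a standard spectral sequence. Set $q := \sup\{i : \Tor_i^R(M,N) \neq 0\}$, which is finite by hypothesis. Since being Cohen-Macaulay, having finite reducing CI-dimension, and eventual vanishing of Tor all localize, Theorem~\ref{mainthm} applies over $R_\fp$ whenever both $M_\fp, N_\fp$ are non-zero, giving
\[
\depth_{R_\fp}(M_\fp) + \depth_{R_\fp}(N_\fp) \;=\; \depth(R_\fp) + \depth_{R_\fp}\bigl(M_\fp \ltensor_{R_\fp} N_\fp\bigr).
\]
When $M_\fp = 0$ or $N_\fp = 0$, the contribution at $\fp$ to the supremum on the right-hand side of the theorem is $-\infty$ and can be ignored.

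For the inequality $\geq$, I would fix an arbitrary such $\fp$ and write $X_\fp := M_\fp \ltensor_{R_\fp} N_\fp$; its $i$-th homology is $\Tor_i^R(M,N)_\fp$, which vanishes for $i > q$. The standard convergent spectral sequence computing $\rHom_{R_\fp}(k(\fp), X_\fp)$ takes the form
\[
E_2^{p,-i} = \Ext^p_{R_\fp}\bigl(k(\fp), \Tor_i^R(M,N)_\fp\bigr) \;\Longrightarrow\; \Ext^{p-i}_{R_\fp}\bigl(k(\fp), X_\fp\bigr),
\]
and $E_2^{p,-i} = 0$ whenever $p < 0$ or $i > q$. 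Thus no term contributes to $\Ext^n_{R_\fp}(k(\fp), X_\fp)$ for $n < -q$, so $\depth_{R_\fp}(X_\fp) \geq -q$, and substituting into the previous display yields $\depth(R_\fp) - \depth_{R_\fp}(M_\fp) - \depth_{R_\fp}(N_\fp) \leq q$ at every $\fp$.

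To realize the supremum, I would pick $\fp \in \Ass_R(\Tor_q^R(M,N))$; then $\depth_{R_\fp}(\Tor_q^R(M,N)_\fp) = 0$, and in particular both $M_\fp, N_\fp$ are non-zero. The corner term $E_2^{0,-q} = \Hom_{R_\fp}(k(\fp), \Tor_q^R(M,N)_\fp)$ is therefore non-zero, and its outgoing differentials target $\Ext^r_{R_\fp}(k(\fp), \Tor_{q+r-1}^R(M,N)_\fp) = 0$ (for $r \geq 2$, since $q+r-1 > q$), while incoming differentials originate in terms with negative Ext index and also vanish. Hence $E_\infty^{0,-q} \neq 0$, so $\Ext^{-q}_{R_\fp}(k(\fp), X_\fp) \neq 0$ and $\depth_{R_\fp}(X_\fp) \leq -q$. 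Combined with the previous step, $\depth_{R_\fp}(X_\fp) = -q$, and the depth formula at this $\fp$ becomes $q = \depth(R_\fp) - \depth_{R_\fp}(M_\fp) - \depth_{R_\fp}(N_\fp)$, realizing the supremum.

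The main obstacle is the derived-category bookkeeping: one must interpret $\depth_{R_\fp}(X_\fp)$ in Foxby's sense and verify the survival of the corner of the spectral sequence to $E_\infty$, equivalently a sharp form of the inequality $\depth_R(X) \geq \inf_i\{\depth_R(H_i X) - i\}$. Once this is set up, the argument reduces to the localization and associated-prime argument above together with Theorem~\ref{mainthm}, which in effect replaces the finite projective dimension hypothesis in Jorgensen's formula (cf.~\ref{JDF}) by finite reducing CI-dimension.
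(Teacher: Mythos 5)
Your proposal is correct and follows essentially the same route as the paper: localize, apply Theorem~\ref{mainthm} at each prime, bound $\depth_{R_\fp}(M_\fp \ltensor_{R_\fp} N_\fp)$ from below by $-q$ using the vanishing of $\Tor_{>q}$, and then realize the supremum at an associated prime of $\Tor_q^R(M,N)$ where that depth is exactly $-q$. The only cosmetic difference is that you carry out the spectral-sequence computation explicitly, whereas the paper packages exactly this content into the auxiliary facts \ref{cxp}(iii) and \ref{rmk:ss} (the latter attributed to Iyengar), which it proves separately so as to avoid spectral sequences in the body of the argument.
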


%

%


\section{Definitions, preliminary results, and examples}

In this section we record some definitions, and discuss several examples and preliminary results that are needed in the subsequent sections.

\begin{chunk} (\textbf{Gorenstein and complete intersection dimensions \cite{AuBr, AGP}}) \label{G} 

An $R$-module $M$ is said to be \emph{totally reflexive} if the natural map $M \to M^{\ast\ast}$, where $M^{\ast}=\Hom_R(M,R)$, is bijective and $\Ext^i_R(M,R) = 0 = \Ext^i_R(M^{\ast},R)$ for all $i\geq 1$. The infimum of $n$ for which there exists an exact sequence $0\to X_n \to \cdots \to X_0 \to M \to 0$, such that each $X_i$ is totally reflexive, is called the \emph{Gorenstein dimension} of $M$. If $M$ has Gorenstein dimension $n$, we write $\Gdim_R(M) = n$. Therefore, $M$ is totally reflexive if and only if $\Gdim_R(M)\leq 0$, where it follows by convention that $\Gdim_R(0)=-\infty$. 

A diagram of local ring maps $R \to R' \twoheadleftarrow S$  is called a \emph{quasi-deformation} if $R \to R'$ is flat and the kernel of the surjection $R' \twoheadleftarrow S$ is generated by a regular sequence on $S$. The \emph{complete intersection dimension} of $M$ is defined as follows:
\begin{equation}\notag{}
\CI_R(M)=\inf\{ \pd_S( M\otimes_RR') -\pd_S(R'): R \to R' \twoheadleftarrow S \text{ is a quasi-deformation}\}.
\end{equation}

Note that, if $R$ is a complete intersection ring, then each $R$-module has finite complete intersection dimension \cite[1.3]{AGP}. Moreover, the inequalities $\Gdim_R(M)\leq \CI_R(M) \leq \pd_R(M)$ hold in general; if one of these dimensions is finite, then it is equal to the one to its left; see \cite[1.4]{AGP}. 

We set $\pd_R(0)=\Gdim_R(0)=\cid_R(0)=-\infty$ and also $\depth_R(0)=\infty$. \qed
\end{chunk}

The reducing projective dimension definition \cite{CA} originated from the reducible complexity definition, a notion introduced by Bergh; see \cite{Be}. Hence we first recall the definition of reducible complexity. In the following $\Omega_R^i M$ denotes the $i$-th syzygy  of a given $R$-module $M$ in its minimal free resolution.

\begin{chunk} (Complexity and reducible complexity \cite{Av1, Be}) \label{rcx} Let $M$ be an $R$-module. The \emph{complexity} of $M$ \cite{Av1} is:
\[
\cx_R(M) = \inf\{r \in \NN_0~|~\exists\; A \in \mathbb{R} \text{ such that } \dim_k\big(\Tor_n^R(M,k)\big) \leq A \cdot n^{r-1} \text{ for all } n\gg 0\}.
\]

Note that $\cx_R(M)=0$ if and only if $\pd_R(M)<\infty$, and $\cx_R(M)\leq 1$ if and only if $M$ has bounded Betti numbers. In general $\cx_R(M)$ may be infinite; in fact, $R$ is a complete intersection if and only if $\cx_R(k)<\infty$ \cite[8.1.2]{Av2}. Moreover, if $\CI_R(M)<\infty$ (for example, if $R$ is a complete intersection ring), then $\cx_R(M)<\infty$; see \cite[5.3]{AGP}.

The module $M$ is said to have \emph{reducible complexity}  \cite{Be} if $\cx_R(M)=0$, or if $0<\cx_R(M)=r<\infty$ and there are short exact sequences of $R$-modules
\[
\{0\to K_{i-1}\to K_i \to\Omega_R^{n_i} K_{i-1} \to 0\}^{r}_{i=0},
\]
with $n_i\geq 0$, where $K_0=M$ and $\cx_R(K_{i+1})=\cx_R(K_{i})-1$ for all $i=0, \ldots, r$ (so that $\pd_R(K_r)<\infty$).

There are many examples of modules that have reducible complexity. For example, periodic modules over arbitrary local rings, and modules over complete intersection rings (or more generally, modules over arbitrary local rings that have finite complete intersection dimension) have reducible complexity; see \cite{AGP, Be} for the details.
\end{chunk}


\begin{chunk} (Reducing invariants \cite{CA, AT}) \label{rdim}  Let $M$ be an $R$-module and let $\Hdim_R$ denote an invariant of $R$-modules, that is, $\Hdim_R$ denotes a function from the set of isomorphism classes of $R$-modules to the set $\ZZ \cup \{ \rpm \infty \}$. Classical examples of $\Hdim_R$ are the projective dimension $\pd_R$, the Gorenstein dimension $\Gdim_R$, and the complete intersection dimension $\cid_R$.

The \emph{reducing invariant} $\rHdim_R(M)$ of $M$ is zero if $\Hdim_R(M)<\infty$. Hence, if $\Hdim_R \in \{\pd_R, \Gdim_R, \cid_R\}$, then $\rHdim_R(0)=0$ since $\Hdim_R(0)=-\infty$; see \cite{Au2}. If $\Hdim_R(M)=\infty$, we write $\rHdim_R(M)<\infty$ provided that there exist integers $r\geq 1$, $a_i\geq 1$, $b_i\geq 1$, $n_i\geq 0$, and short exact sequences of $R$-modules of the form 
\begin{equation}\tag{\ref{rdim}.1}
0 \to K_{i-1}^{{\oplus a_i}} \to K_{i} \to \Omega_R^{n_i}K_{i-1}^{{\oplus b_i}} \to 0,
\end{equation}
for each $i=1, \ldots, r$, where $K_0=M$ and $\Hdim_R(K_r)<\infty$.  If a sequence as in (\ref{rdim}.1) exists, then we call $\{K_0,  \ldots, K_r\}$ a \emph{reducing $\Hdim$-sequence} of $M$; see \cite[2.1]{CA} and \cite[2.5]{AT}.


If $\Hdim_R(M)=\infty$, we define:\begin{equation}\notag{}
\rHdim_R(M)=\inf\{ r\in \NN: \text{there is a reducing $\Hdim$-sequence }  \{K_0,  \ldots, K_r\} \text{ of }  M\}. 
\end{equation}
It follows that  $0\leq \rHdim_R(M)\leq \infty$. 
\qed
\end{chunk}

\begin{rmk} \label{lemrdim} In the definition of reducible complexity and reducing invariants, minimal syzygy modules are used. However, if $\Hdim_R \in \{\pd_R, \Gdim_R, \cid_R\}$, then we may use syzygies that are not necessarily minimal to define $\rHdim_R$. This can be seen as follows:

If $0 \to K \to K_{1} \to \Omega_R^{n}K \oplus F \to 0$ is a short exact sequence of $R$-modules, where $F$ is a free module, then we obtain an induced exact sequence $0 \to K \to K'_{1} \to \Omega_R^{n}K \to 0$, where $K_1\cong K'_{1}  \oplus F$ and $\rHdim_R(K_1)=\rHdim_R(K_1')$; see \cite[3.8 and the proof of the claim in the proof of 4.2]{RD4}. Hence, if $M$ is an $R$-module, then the value of $\rHdim_R(M)$ does not change whether or not we use minimal syzygies to define $\rHdim_R$. \qed
\end{rmk}

In passing we make use of \Cref{lemrdim} and note:

\begin{lem} \label{lemrpd} Let $M$ be an $R$-module and let $\Hdim_R \in \{ \pd_R, \Gdim_R,\cid_R\}$. Then $\rHdim_{R_\fp}(M_\fp) \le \rHdim_R(M)$ for  all $\fp \in \Spec(R)$. 
\end{lem}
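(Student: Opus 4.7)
The plan is to argue directly from the definitions, using two ingredients: (i) each of the classical invariants $\Hdim \in \{\pd, \Gdim, \cid\}$ itself localizes, in the sense that $\Hdim_{R_\fp}(M_\fp) \le \Hdim_R(M)$, and (ii) syzygies behave well under localization even though minimality is generally lost, which is exactly what \Cref{lemrdim} is built to handle. The inequality is trivial when $\rHdim_R(M)=\infty$, and when $\rHdim_R(M)=0$ it reduces to the localization property (i) of $\Hdim$ itself: since $\Hdim_R(M)<\infty$, one has $\Hdim_{R_\fp}(M_\fp)<\infty$, so $\rHdim_{R_\fp}(M_\fp)=0$.

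The heart of the proof is the case $\rHdim_R(M)=r$ with $1\le r<\infty$. Choose a reducing $\Hdim$-sequence $\{K_0,K_1,\ldots,K_r\}$ of $M$ realizing $r$, so $K_0=M$, $\Hdim_R(K_r)<\infty$, and each step is given by a short exact sequence
\[
0 \to K_{i-1}^{\oplus a_i} \to K_i \to \Omega_R^{n_i}K_{i-1}^{\oplus b_i} \to 0.
\]
Localizing at $\fp$ is exact, so each of these remains short exact over $R_\fp$ with the $K_{i-1}$, $K_i$ replaced by $(K_{i-1})_\fp$, $(K_i)_\fp$, and with $(\Omega_R^{n_i}K_{i-1})_\fp$ appearing on the right. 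Moreover, by ingredient (i), $\Hdim_R(K_r)<\infty$ forces $\Hdim_{R_\fp}((K_r)_\fp)<\infty$, which is what is needed at the terminal stage of a candidate reducing sequence for $M_\fp$.

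The main obstacle — and the only nontrivial point — is to interpret $(\Omega_R^{n_i}K_{i-1})_\fp$ as an $n_i$-th syzygy of $(K_{i-1})_\fp$ over $R_\fp$ in the sense required by \ref{rdim}. A minimal free resolution of $K_{i-1}$ over $R$ localizes to a (generally non-minimal) free resolution of $(K_{i-1})_\fp$ over $R_\fp$, and $(\Omega_R^{n_i}K_{i-1})_\fp$ is then the $n_i$-th kernel in this localized resolution, i.e.\ a non-minimal $n_i$-th syzygy of $(K_{i-1})_\fp$. This is precisely the situation addressed in \Cref{lemrdim}: for $\Hdim \in \{\pd_R,\Gdim_R,\cid_R\}$, non-minimal syzygies may be used in the definition of $\rHdim$ without changing its value. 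Hence $\{(K_0)_\fp,(K_1)_\fp,\ldots,(K_r)_\fp\}$ is a reducing $\Hdim$-sequence of $M_\fp$ over $R_\fp$, which yields $\rHdim_{R_\fp}(M_\fp)\le r = \rHdim_R(M)$, completing the argument.
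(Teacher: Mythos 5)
Your argument is correct and follows the same route as the paper: localize the reducing $\Hdim$-sequence, note that localization preserves exactness and finiteness of $\Hdim$, and invoke \Cref{lemrdim} to absorb the loss of minimality in the localized syzygies. You have simply spelled out in detail what the paper compresses into one sentence.
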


\begin{proof} If $\fp \in \Spec(R)$, then the localization of a reducing $\Hdim$-sequence of $M$ over $R$ at $\fp$ is a reducing $\Hdim$-sequence of $M_{\fp}$ over $R_{\fp}$; see \ref{rdim} and \Cref{lemrdim}. So $\rHdim_{R_\fp}(M_\fp) \le \rHdim_R(M)$ for all $\fp \in \Spec(R)$. 
\end{proof}

Next are some further results that are needed later in the sequel.

\begin{chunk} \label{rdim1} The following properties are due to the definition of reducing invariants; see \ref{rdim} and also \cite[2.2]{RD4}:
\begin{enumerate}[\rm(i)]
\item If $1\leq \rHdim_R(M)<\infty$, then there is an exact sequence $0\to M^{\oplus a} \to K \to \Omega^n_R M^{\oplus b}\to 0$, where $a\geq 1$, $b\geq 1$, and $n\geq 0$ are integers, and $K$ is an $R$-module such that $\rHdim_R(K)=\rHdim_R(M)-1$.
\item Conversely, if $0\to M^{\oplus a} \to K \to \Omega^n_RN^{\oplus b}\to 0$ is an exact sequence of $R$-modules, where $a\geq 1$, $b\geq 1$, and $n\geq 0$ are integers, then $\rHdim_R(M)\leq \rHdim_R(K)+1$. 
\end{enumerate}
\end{chunk}

\begin{chunk}\label{rdim2} Let $M$ be an $R$-module. 
\begin{enumerate}[\rm(i)]
\item $\rcid_R(M)<\infty$ if and only if $\rpd_R(M)<\infty$; see \cite[2.7]{CDKM}.
\item If $x\in \fm$ is a non zero-divisor on $R$ and $M$, then $\rpd_{R/xR}(M/xM)\leq\rpd_R(M)$; see \cite[3.4]{RD4}. 
\end{enumerate}
\end{chunk}

\begin{rmk} \phantom{}
\begin{enumerate}[\rm(i)]
\item The definition of the reducing dimension we use in this paper is taken from \cite{AT}, even though this notion was originally defined in \cite{CA}. The difference between these two definitions is that \cite{AT} only requires the integers $n_i$ in Definition \ref{rdim} to be nonnegative, while \cite{CA} requires these numbers to be positive. Hence, if a module has finite reducing invariant with respect to the definition given in \cite{CA}, then it has also finite reducing invariant with respect to the definition we use in this paper. 
\item Given an integer $c\geq 1$, one can always find a ring $R$ and an $R$-module $M$ with $\rpd_R(M)=c<\infty$ and $\pd_R(M)=\infty$. More precisely, if $R$ is a singular complete intersection ring of codimension $c$ (for example, $R = k\ps{x_1,\ldots,x_c,y_1,\ldots , y_c}/(x_1y_1,\ldots , x_cy_c)$ for some field $k$) and $M=\Omega_R^i(k)$ for an integer $i\geq 0$, then $\pd_R(M)=\infty$ and $\rpd_R(M)=\cx_R(M)=c$; see \cite[2.8]{RD4}. 
\item Several characterizations of local rings in terms of reducing dimensions were obtained in \cite{CDKM}. For example, it follows that a local ring $R$ is Gorenstein if and only if each $R$-module has finite reducing Gorenstein dimension; see \cite[3.12]{CDKM}. Similarly there are various questions that remain open about these invariants. For example, it is not known whether or not the residue field of a local ring always has finite reducing projective dimension; see \cite{RD4}. \qed
\end{enumerate}
\end{rmk}


In general, a module having reducible complexity also has finite reducing projective dimension; see \ref{rcx} and \ref{rdim}. On the other hand, modules that have finite reducing projective dimension -- which have infinite complexity -- are abundant in the literature. In other words, reducing projective dimension is a finer invariant than reducible complexity. Next we record several examples which illustrate this fact. 

\begin{eg} (\cite[2.3 and 2.4]{CA}) \label{ahlat1} Let $R=k\ps{x,y}/(x^2,xy,y^2)$. The beginning of the minimal free resolution of $k$ is:
\[
\cdots\lra R^{\oplus 4} \xrightarrow{\begin{pmatrix}x\;y\;0\;0\\0\;0\;x\;y\end{pmatrix}} R^{\oplus 2} \xrightarrow{\begin{pmatrix}x\; y\end{pmatrix}} R \lra k \lra 0.
\]
Notice $R$ is not Gorenstein so that we have $\cx_R(k)=\CI_R(k)=\Gdim_R(k)=\pd_R(k)=\infty$; see \ref{G} and \ref{rcx}. As $\fm \cong k^{\oplus 2}$, it follows that $\Omega_R^2k\iso k^{\oplus 4}$. So, there is an exact sequence $0 \to k^{\oplus 4} \to R^{\oplus 2} \to \Omega_R k \to 0$. Therefore $\{k, R^{\oplus 2}\}$ is a reducing $\pd$-sequence of $k$. We see that $\rpd_R(k)=\rgd_R(k)=1<\infty$. 

This construction can be generalized as long as $\fm^2=0$.  In that case there is an exact sequence $0 \to k^{\oplus e^2} \to R^{\oplus e}\to \Omega_R k\to 0$, where $e$ is the embedding dimension of $R$. This fact implies that $\{k, R^{\oplus e}\}$ is a reducing pd-sequence (and hence reducing $\gd$) sequence of $k$. If $R$ is not Gorenstein, then $\rpd_R(k)=1=\rgd_R(k)$ and $\pd_R(k)=\infty=\gd_R(k)$.
\end{eg}

\begin{eg} (\cite[2.7]{CA}) \label{ahlat2} Let $R=k\ps{x^3,x^2y,xy^2,y^3}$, the 3rd Veronese subring of the formal power series ring $k\ps{x,y}$, and consider the $R$-module $M=(x^2,xy,y^2)$. It follows that $\gd_R(M)=\infty$, and there exists a short exact sequence $0\to M^{\oplus 4}\to F\to \Omega_R M\to 0$ for some free $R$-module $F$. Therefore we have that $\rgd_R(M)= 1$.
\end{eg}

\begin{eg} \label{ahlat3}  (\cite[2.12]{RD4}) Let $R$ be a Cohen-Macaulay local ring which is not regular. If $I$ is an Ulrich ideal of $R$ which is not a parameter ideal, then $\pd_R(R/I)=\infty$ and $\rpd_R(R/I)=1$. 

If $R$ has minimal multiplicity and $|k|=\infty$, then $\fm$ is an Ulrich ideal which is not a parameter ideal so that $\pd_R(k)=\infty$ and $\rpd_R(k)=1$. Some specific examples can be given as follows:
\begin{enumerate}[\rm(i)]
\item If $R=k\ps{x,y,z}/(x^3-y^2, z^2-x^2y)$ and $I = (x,y)$. Then $R$ is Cohen-Macaulay but not regular, and $I$ is an Ulrich ideal of $R$ which is not a parameter ideal. So $\pd_R(R/I)=\infty$ and $\rpd_R(R/I)=1$.
\item Let $R=\CC\ps{x,y}/(x,y)^2$, or $R=\CC\ps{t^3,t^4,t^5}$, or $R=\CC\ps{t^4,t^5,t^6,t^7}$. Then $R$ is a non-regular Cohen-Macaulay local ring with minimal multiplicity. It follows that $\pd_R(k)=\infty$ and $\rpd_R(k)=1$. \qed
\end{enumerate}
\end{eg}

\begin{chunk} Given $R$-modules $M$ and $N$, we set $\q^R(M,N)=\sup\{i: \Tor_i^R(M,N)\neq 0\}$. Note that, if $M$ and $N$ are nonzero $R$-modules, then $\q^R(M,N)=0$ if and only if $M$ and $N$ are Tor-independent.
\end{chunk}

We need several facts about complexes for our arguments.

\begin{chunk} \label{cxp} An $R$-complex $X$ is a complex of (finitely generated) $R$-modules which is indexed homologically.
\begin{enumerate}[\rm(i)]
\item We say $X$ is homologically bounded if $\sup(X)-\inf(X)<\infty$, where $\inf(X) = \inf\{n\mid \rH_n(X)\not=0\}$ and $\sup(X) = \sup\{n \mid \rH_n(X)\not=0\}$. Also the $R$-complex $X[1]$ is defined by $X[1]_n=X_{n-1}$ and $\partial^{X[1]}_n=-\partial^X_{n-1}$. 
\item The depth of $X$ can be defined as $\depth_R (X)=-\sup\left( \rHom_R(k,X)\right)$; see \cite[6.1]{I}. If $X\not\simeq 0$ and $X$ is homologically bounded, then $\depth_R(X) \leq \dim(R)-\sup(X)$ \cite[3.17]{FoxbyFlat}. Hence, if $X=M\ltensor_RN$ for some nonzero $R$-modules $M$ and $N$ such that $\q^R(M,N)<\infty$, then $\depth_R(M\ltensor_RN) \leq \dim(R)$.
\item Assume $s=\sup(X)<\infty$. Then $\depth_R (X) \ge -s$. Moreover, it follows that $\depth_R (X) = -s$ if and only if $\depth_R\big(\rH_{-s}(X)\big)=0$; see 
\cite[3.3 on page 160]{FoxbyFlat}.
\item Let $X \to Y \to Z \to X[1]$ be an exact triangle, where $X$, $Y$, and $Z$ are $R$-complexes such that $\sup(X)<\infty$ and $\sup(Z)<\infty$ (so that $\sup(Y)<\infty$). Then the \emph{derived depth lemma} yields the following; see, for example, \cite[1.2.9]{BH}.
\begin{enumerate}[\rm(a)]
\item $\depth_R(X) \ge \min\{\depth_R(Y), \depth_R(Z)+1\}$.
\item $\depth_R(Y) \ge \min\{\depth_R(X), \depth_R(Z)\}$.
\item $\depth_R(Z) \ge \min\{\depth_R(Y), \depth_R(X)-1\}$.
\item If $\depth_R(Z) \ge \depth_R(X)$, then $\depth_R(Y)=\depth_R(X)$.
\end{enumerate}
\item We say the \emph{derived depth formula} \cite{CJD} holds for given $R$-modules $M$ and $N$ provided that:
$$\depth_R(M)+\depth_R(N) = \depth(R)+\depth_R(M\ltensor_RN).$$
\item Let $M$ and $N$ be $R$-modules. If $\q^R(M,N)<\infty$ and $\CI_R(M)<\infty$, then the derived depth formula holds for $M$ and $N$; see \cite[5.4]{CJD}. This recovers a result of Foxby \cite[2.1]{Foxby} who initially proved that the derived depth formula holds for $M$ and $N$ if $\pd_R(M)<\infty$; see also \cite[2.2]{I}. \qed
\end{enumerate}
\end{chunk}


We finish this section by recording the preliminary results which are needed for the proof of Theorem \ref{mainthm}. To not to disturb the flow of the paper, we prove these preliminary results, namely \ref{lem2.1intro}, \ref{p27}, \ref{rmkmain}, and \ref{rmk:ss}, in section 5.

%


\begin{chunk}\label{lem2.1intro} Let $M$ and $N$ be $R$-modules. Assume there is a short exact sequence of $R$-modules
\begin{equation}\notag{}
0 \to M^{\oplus a} \to K \to \Omega_R^n M^{\oplus b} \to 0,
\end{equation}
where $a\geq 1$, $b\geq 1$, and $n \geq 0$ are integers.
\begin{enumerate}[\rm(i)]
\item If $\depth_R(M) \le \depth(R)$, then $\depth_R(M) = \depth_R(K)$.
\item If $\q^R(M,N)<\infty$, then $\q^R(M,N) = \q^R(K,N)$. 
\item Assume $\q^R(M,N)<\infty$. If $n\geq 1$, or $\depth_R \Big(\Tor^R_{\q^R(M,N)}(M,N)\Big) \le 1$, then
\[
\depth_R\Big(\Tor^R_{\q^R(M,N)}(M,N)\Big) = \depth_R\Big(\Tor^{R}_{\q^R(K,N)}(K,N)\Big). 
\]
\end{enumerate}
\end{chunk}

\begin{chunk} \label{p27} Let $M$ and $N$ be $R$-modules. Assume $\q^R(M,N)<\infty$. Assume further there exists $x\in \fm$ such that $x$ is a non zero-divisor on $R$, $M$, and $N$. Then the following conditions are equivalent:
\begin{enumerate}[\rm(i)]
\item $\depth_R(M)+\depth_R(N) = \depth(R)+\depth_R(M\ltensor_RN)$. 
\item $\depth_{R/xR}(M/xM)+\depth_{R/xR}(N/xN) = \depth(R/xR)+\depth_{R/xR}(M/xM\ltensor_{R/xR}N/xN)$. 
\end{enumerate}
\end{chunk}

\begin{chunk} \label{rmkmain} Let $M$ and $N$ be $R$-modules such that $\q^R(M,N)<\infty$. Assume the following conditions hold:
\begin{enumerate}[\rm(i)]
\item $\depth_R(M)\leq\depth(R)$. 
\item There is an exact sequence of $R$-modules $0 \to M^{\oplus a} \to K \to \Omega_R^n M^{\oplus b} \to 0$, where $a\geq 1$, $b\geq1$, and $n\geq0$ are integers, and the derived depth formula holds for $K$ and $N$.
\end{enumerate}
Then $\depth_R(K \ltensor_R N) = \min \{\depth_R(M \ltensor_R N), \depth_R(N)\}$. \qed 
\end{chunk}

The following useful result is essentially due to Iyengar \cite[2.3]{I}; it allows us to avoid using spectral sequences in the proof of Proposition \ref{mainprop}.

\begin{chunk} \label{rmk:ss} Let $X$ be an $R$-complex such that $s=\sup(X)$. Assume $s<\infty$ and $\depth_R\big(\rH_{s}(X)\big)\leq 1$. Then $\depth_R(X)=\depth_R\big(\rH_{s}(X)\big)-s$. 
\end{chunk}

\section{Main result}

In this section we prove Theorem \ref{thmintro} and determine some conditions that imply the derived depth formula holds. We start by preparing a lemma; recall that we set $\q^R(M,N)=\sup\{i: \Tor_i^R(M,N)\neq 0\}$ for given $R$-modules $M$ and $N$.

\begin{lem}\label{mainlem} Let $M$ and $N$ be $R$-modules such that $\q^R(M,N)<\infty$. Assume there is an exact sequence of $R$-modules $0 \to M^{\oplus a} \to K \to \Omega_R^n M^{\oplus b} \to 0$, where $a\geq 1$, $b\geq 1$, and $n\geq 0$ are integers. Assume the derived depth formula holds for $K$ and $N$. Assume further \emph{at least} one of the following holds:
\begin{enumerate}[\rm(i)]
\item $\depth_R(M) < \depth(R)$.
\item $\depth_R(M)\leq \depth(R)$ and $N$ is maximal Cohen-Macaulay.
\item $\depth_R(M)\leq \depth(R)$ and $\depth_R( \Tor^R_{\q^R(M,N)}(M,N)) \le 1$.
\end{enumerate}
Then the derived depth formula holds for $M$ and $N$.
\end{lem}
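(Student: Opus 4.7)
The plan is to deduce the derived depth formula for $M$ and $N$ from the one assumed for $K$ and $N$ by establishing two equalities:
$$\depth_R(K)=\depth_R(M)\qquad\text{and}\qquad \depth_R(K\ltensor_R N)=\depth_R(M\ltensor_R N).$$
The first is immediate from \ref{lem2.1intro}(i), since in each of (i), (ii), (iii) the hypothesis $\depth_R(M)\le \depth(R)$ is present. Substituting this into the assumed derived depth formula for $K$ and $N$ yields
$$\depth_R(M)+\depth_R(N)=\depth(R)+\depth_R(K\ltensor_R N),$$
so the conclusion will follow as soon as the second equality is established. The rest of the argument is devoted to that equality.

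First I would apply \ref{rmkmain}, whose hypotheses are available in all three cases, to obtain
$$\depth_R(K\ltensor_R N)=\min\{\depth_R(M\ltensor_R N),\;\depth_R(N)\}.$$
This reduces the second equality to the inequality $\depth_R(M\ltensor_R N)\le \depth_R(N)$. In case (i), the strict inequality $\depth_R(M)<\depth(R)$ together with the displayed formula forces $\depth_R(K\ltensor_R N)<\depth_R(N)$, so the minimum is necessarily attained by $\depth_R(M\ltensor_R N)$. In case (ii), maximal Cohen-Macaulayness of $N$ gives $\depth_R(N)=\dim(R)$, while \ref{cxp}(ii) combined with $\q^R(M,N)<\infty$ gives $\depth_R(M\ltensor_R N)\le \dim(R)$, which is the required inequality.

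For case (iii) I would bypass the minimum formula altogether and compute both depths directly. Set $q=\q^R(M,N)$; by \ref{lem2.1intro}(ii) this also equals $\q^R(K,N)$. The hypothesis $\depth_R(\Tor^R_q(M,N))\le 1$ combined with \ref{lem2.1intro}(iii) yields both $\depth_R(\Tor^R_q(K,N))\le 1$ and the equality $\depth_R(\Tor^R_q(M,N))=\depth_R(\Tor^R_q(K,N))$. Applying \ref{rmk:ss} separately to the $R$-complexes $M\ltensor_R N$ and $K\ltensor_R N$, each having supremum $q$ and top-homology depth at most $1$, then gives
$$\depth_R(M\ltensor_R N)=\depth_R(\Tor^R_q(M,N))-q=\depth_R(\Tor^R_q(K,N))-q=\depth_R(K\ltensor_R N),$$
which is exactly the needed identity.

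The main obstacle is the uniform control of $\depth_R(M\ltensor_R N)$ across cases whose hypotheses pull in genuinely different directions: case (i) uses a strict depth gap to rule out one branch of the minimum, case (ii) leans on the general dimension bound for derived tensor products of Tor-independent modules, and case (iii) replaces the minimum formula entirely with the Iyengar-type collapse \ref{rmk:ss}. The delicate point is verifying that the hypotheses of \ref{rmkmain} and of \ref{rmk:ss} are really available on the nose in each branch; once that bookkeeping is done, the assembly into the depth formula for $M$ and $N$ is a single short substitution.
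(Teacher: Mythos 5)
Your proof is correct and follows essentially the same route as the paper's: reduce to showing $\depth_R(K\ltensor_R N)=\depth_R(M\ltensor_R N)$ via \ref{lem2.1intro}(i), invoke \ref{rmkmain} to get the minimum formula and dispatch cases (i) and (ii) by comparing $\depth_R(M\ltensor_R N)$ with $\depth_R(N)$, and handle case (iii) by applying \ref{rmk:ss} to both derived tensor products together with \ref{lem2.1intro}(ii)–(iii). The bookkeeping you flag at the end checks out exactly as in the paper.
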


\begin{proof} As we assume $\depth_R(M)\leq \depth(R)$, it follows from \ref{lem2.1intro}(i) that $\depth_R(K) = \depth_R(M)$. Also, by our hypothesis, the derived depth formula holds for $K$ and $N$. These facts yield:
\begin{equation}\tag{\ref{mainlem}.1}
 \depth_R(N) - \depth (K \ltensor_R N) =  \depth(R)-\depth_R(M). 
\end{equation}
So, in view of (\ref{mainlem}.1), the derived depth formula holds for $M$ and $N$ if $\depth(M \ltensor_R N)=\depth_R(K \ltensor_R N)$. Moreover, by  \ref{rmkmain}, we have:
\begin{equation}\tag{\ref{mainlem}.2}
\depth_R(K \ltensor_R N) = \min \{\depth_R(M \ltensor_R N), \depth_R(N)\}.
\end{equation}

Assume part (i) holds. Then (\ref{mainlem}.1) implies that $\depth_R(N) > \depth (K \ltensor_R N)$. Hence we conclude from  (\ref{mainlem}.2) that $\depth_R(K \ltensor_R N) = \depth_R(M \ltensor_R N)$.

Assume part (ii) holds. Then we have $\depth_R(N) = \dim(R)$. Moreover, as $\q^R(M,N)<\infty$, it follows that $\depth(M \ltensor_R N)\leq \dim(R)$; see  \ref{cxp}(ii).
Thus (\ref{mainlem}.2) yields $\depth(K \ltensor_R N) = \depth (M \ltensor_R N)$.

Finally assume part (iii) holds. Note that $\q^R(M,N)=\q^R(K,N)$; see \ref{lem2.1intro}(ii). Set $q=\q^R(M,N)$. As we assume $\depth_R(\Tor_q^R(M,N)) \le 1$, \ref{rmk:ss} shows that 
\begin{equation}\tag{\ref{mainlem}.3}
\depth_R(M\ltensor_R N)=\depth_R(\Tor_q^R(M,N)) -q.
\end{equation}
On the other hand, we know by \ref{lem2.1intro}(iii) that $\depth_R(\Tor_q^R(K,N)) = \depth_R(\Tor_q^R(M,N))$.
Hence, using \ref{rmk:ss} once more, we see:
\begin{equation}\tag{\ref{mainlem}.4}
\depth_R (K\ltensor_R N)=\depth_R( \Tor_q^R(K,N)) - q.
\end{equation}
Consequently (\ref{mainlem}.3) and (\ref{mainlem}.4) imply that $\depth (K\ltensor_R N)=\depth (M\ltensor_R N)$.
\end{proof}


\begin{prop}\label{mainprop} Let $M$ and $N$ be $R$-modules such that $\q^R(M,N)<\infty$. Assume \emph{at least} one of the following conditions holds:
\begin{enumerate}[\rm(i)]
\item $\depth_R(M) < \depth(R)$.
\item $\depth_R(M)\leq \depth(R)$ and $N$ is maximal Cohen-Macaulay.
\item $\depth_R(M)\leq \depth(R)$ and $\depth_R(\Tor_q(M,N)) \le 1$.
\end{enumerate}
If $\rcid_R(M) < \infty$, then the derived depth formula holds for $M$ and $N$.
\end{prop}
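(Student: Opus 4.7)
The plan is to argue by induction on $r = \rcid_R(M)$, which is finite by assumption. The base case $r = 0$ means $\cid_R(M) < \infty$ by the convention recorded in \ref{rdim}, and then \ref{cxp}(vi) applied to $M$ and $N$ (using $\q^R(M,N) < \infty$) immediately yields the derived depth formula. In fact none of conditions (i)--(iii) are needed in this case.

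For the inductive step with $r \geq 1$, I would invoke \ref{rdim1}(i) to produce an exact sequence
\[
0 \to M^{\oplus a} \to K \to \Omega_R^n M^{\oplus b} \to 0
\]
with $a, b \geq 1$, $n \geq 0$, and $\rcid_R(K) = r - 1$. The plan is then to verify that $(K, N)$ still satisfies the hypotheses of the proposition, apply the induction hypothesis to obtain the derived depth formula for $(K, N)$, and finally invoke \Cref{mainlem} with the very same exact sequence to deduce the derived depth formula for $(M, N)$.

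Verifying the hypotheses for $(K, N)$ requires two ingredients. First, \ref{lem2.1intro}(ii) yields $\q^R(K, N) = \q^R(M, N) < \infty$. Second, I must check that whichever of (i), (ii), or (iii) holds for $(M, N)$ also holds for $(K, N)$. The depth-of-$M$ inequalities transfer through \ref{lem2.1intro}(i), which gives $\depth_R(K) = \depth_R(M)$ when $\depth_R(M) \leq \depth(R)$; condition (ii) is preserved because $N$ is unchanged; and for (iii), with $q := \q^R(M, N) = \q^R(K, N)$, the second alternative of \ref{lem2.1intro}(iii) applies directly and gives $\depth_R(\Tor^R_q(K, N)) = \depth_R(\Tor^R_q(M, N)) \leq 1$.

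The main obstacle, modest as it is, lies in confirming that condition (iii) genuinely descends from $M$ to $K$ — this is precisely what \ref{lem2.1intro}(iii) is designed to deliver, and without it the induction would stall on case (iii). Everything else is bookkeeping: once the derived depth formula is in hand for $(K, N)$, \Cref{mainlem} transfers it to $(M, N)$ because its three alternative hypotheses match those of the proposition verbatim.
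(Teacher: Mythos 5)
Your proposal is correct and follows essentially the same route as the paper: induction on $\rcid_R(M)$, base case via \ref{cxp}(vi), inductive step via \ref{rdim1}(i) to produce $K$, then \ref{lem2.1intro} to transfer conditions (i)--(iii) to $(K,N)$, and finally \Cref{mainlem} to pass the formula back to $(M,N)$. The only cosmetic difference is that you spell out explicitly which part of \ref{lem2.1intro} handles each hypothesis, whereas the paper compresses this into a single reference.
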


\begin{proof} We set $\rcid_R(M)=c$ and proceed by induction on $c$. If $c=0$, then $\CI_R(M)<\infty$ so that the derived depth formula holds for $M$ and $N$; see \ref{rdim} and \ref{cxp}(vi). 

Next assume $c\geq 1$. Then there is an exact sequence of $R$-modules $0 \to M^{\oplus a} \to K \to \Omega_R^n M^{\oplus b} \to 0$, where $a\geq 1$, $b\geq 1$, and $n\geq 0$ are integers, such that $\rcid_R(K) < \rcid_R(M)$; see \ref{rdim1}(i). Moreover, \ref{lem2.1intro} implies that the conditions (i), (ii), and (iii) hold for $K$. Hence, by the induction hypothesis, we see that the derived depth formula holds for $K$ and $N$. Consequently, the result follows from Lemma \ref{mainlem}.
\end{proof}




Our next result establishes Theorem \ref{thmintro} and generalizes the result of Bergh-Jorgensen advertised as Theorem \ref{thmintroBJ} in the introduction; recall that each module that has reducible complexity also has finite reducing projective dimension, but not necessarily vice versa; see Examples \ref{ahlat1}, \ref{ahlat2}, and \ref{ahlat3}. 

\begin{thm} \label{mainthm} If $R$ is a Cohen-Macaulay local ring and $M$ and $N$ are $R$-modules such that $\q^R(M,N)<\infty$ and $\rcid_R(M)<\infty$, then the derived depth formula holds for $M$ and $N$.
\end{thm}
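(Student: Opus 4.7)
The plan is to derive Theorem~\ref{mainthm} from Proposition~\ref{mainprop} by a case analysis, reducing modulo a regular element via \ref{p27} when Proposition~\ref{mainprop} does not apply directly; I would proceed by induction on $\depth(R)$.

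For the base case $\depth(R) = 0$, every nonzero $R$-module has depth zero, so $\depth_R\bigl(\Tor^R_{\q^R(M,N)}(M,N)\bigr) = 0 \le 1$; since $\depth_R(M) \le \depth(R)$ holds automatically over a Cohen-Macaulay ring, condition (iii) of Proposition~\ref{mainprop} applies.

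In the inductive step $\depth(R) \ge 1$: if $\depth_R(M) < \depth(R)$, apply Proposition~\ref{mainprop}(i); if both $M$ and $N$ are maximal Cohen-Macaulay, apply (ii). Otherwise $M$ is MCM while $N$ is not. When $\depth_R(N) \ge 1$, prime avoidance produces $x \in \mathfrak{m}$ that is a non zero-divisor on each of $R$, $M$, $N$, and I would invoke \ref{p27} to transfer the problem to $R/xR$. The hypotheses descend: $R/xR$ is Cohen-Macaulay with $\depth(R/xR) = \depth(R) - 1$ and $\rcid_{R/xR}(M/xM) \le \rcid_R(M) < \infty$ by \ref{rdim2}(ii); using that $x$ is $M$-regular one has $\Tor_i^{R/xR}(M/xM, N/xN) \cong \Tor_i^R(M, N/xN)$, and the $\Tor$ long exact sequence coming from $0 \to N \xrightarrow{x} N \to N/xN \to 0$ gives $\q^{R/xR}(M/xM, N/xN) \le \q^R(M, N) + 1 < \infty$. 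The inductive hypothesis then concludes.

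The main obstacle is the remaining sub-case where $M$ is MCM and $\depth_R(N) = 0$, since there no $x \in \mathfrak{m}$ can be regular on all of $R$, $M$, $N$ simultaneously. For this I would run an inner induction on $\rcid_R(M)$: the base $\rcid_R(M) = 0$ gives $\CI_R(M) < \infty$ and so the derived depth formula by \ref{cxp}(vi). In the inductive step, take a short exact sequence $0 \to M^{\oplus a} \to K \to \Omega^n_R M^{\oplus b} \to 0$ with $\rcid_R(K) < \rcid_R(M)$ coming from the reducing sequence of $M$, and note that $K$ is MCM (its outer terms are MCM over the Cohen-Macaulay ring, and the depth lemma forces the middle term to be MCM as well). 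The inner inductive hypothesis gives the derived depth formula for $K$ and $N$, and combining this with \ref{rmkmain} yields $\depth_R(K \ltensor_R N) = \depth_R(N) = 0 = \min\{\depth_R(M \ltensor_R N),\, 0\}$, which forces $\depth_R(M \ltensor_R N) \ge 0$. The matching upper bound $\depth_R(M \ltensor_R N) \le 0$ should come from \ref{rmk:ss} applied to $X = M \ltensor_R N$ together with the derived depth lemma on the exact triangle induced by the short exact sequence; this is where the hard work lies.
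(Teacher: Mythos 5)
Your overall architecture matches the paper's: an outer induction that reduces modulo a regular element via \ref{p27} when $\depth_R(N)\ge 1$, and an inner induction on $\rcid_R(M)$ when $M$ is maximal Cohen-Macaulay and $\depth_R(N)=0$, with $\rmkmain$ supplying $\depth_R(K\ltensor_RN)=\min\{\depth_R(M\ltensor_RN),\depth_R(N)\}=0$. The paper happens to induct on $\dim(R)$ with base case $d\le 1$ while you induct on $\depth(R)$ with base case $0$, but over Cohen-Macaulay rings these are equivalent, and both base cases correctly invoke Proposition~\ref{mainprop}(iii).

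The genuine gap is exactly the step you flag as ``where the hard work lies'': the upper bound $\depth_R(M\ltensor_RN)\le 0$. Your proposed route does not work. Applying \ref{rmk:ss} to $X=M\ltensor_RN$ needs $\depth_R(\Tor^R_q(M,N))\le 1$, which is not available when $\dim(R)\ge 2$; and the derived depth lemma applied to the exact triangle coming from the reducing sequence $0\to M^{\oplus a}\to K\to\Omega^n_RM^{\oplus b}\to 0$ only produces lower bounds on depth, so it cannot force $\depth_R(M\ltensor_RN)\le 0$ either. The paper closes the gap by a different device: it turns to the syzygy sequence $0\to\Omega_RN\to F\to N\to 0$ of $N$, uses the ordinary depth lemma (with $d\ge 2$) to get $\depth_R(\Omega_RN)=1$, and then applies the already-established regular-element case (Case 1) to the pair $(M,\Omega_RN)$ to compute $\depth_R(M\ltensor_R\Omega_RN)=\depth_R(M)+\depth_R(\Omega_RN)-d=1$. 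Feeding the exact triangle $M\ltensor_R\Omega_RN\to M\ltensor_RF\to M\ltensor_RN\to(M\ltensor_R\Omega_RN)[1]$ into the derived depth lemma \ref{cxp}(iv)(a) then yields a contradiction if one assumes $\depth_R(M\ltensor_RN)\ge 1$, since it would force $1=\depth_R(M\ltensor_R\Omega_RN)\ge\min\{\depth_R(M\ltensor_RF),\depth_R(M\ltensor_RN)+1\}\ge 2$. You should adopt this syzygy-shift argument; without it the proof does not close.
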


\begin{proof} Assume $R$ is a Cohen-Macaulay, $\q^R(M,N)<\infty$, and $\rcid_R(M)<\infty$. We may also assume both $M$ and $N$ are nonzero. Then it follows that $\rpd_R(M)<\infty$; see \ref{rdim2}(i). 

If $\depth_R(M) < \depth(R)$, Proposition \ref{mainprop}(i) shows that the derived depth formula holds for $M$ and $N$. Hence we may assume $\depth_R(M)= \depth(R)$, that is, $M$ is maximal Cohen-Macaulay. We set $d=\dim(R)$ and proceed by induction on $d$. 

If $d \le 1$, then Proposition \ref{mainprop}(iii) shows that the derived depth formula holds for $M$ and $N$, and establishes the base case of the induction on $d$. Next we assume $d \ge 2$.

\emph{Case} 1: Assume $\depth_R(N)\geq 1$. Then there is an element $x\in \fm$ which is non zero-divisor on $M$, $N$, and $R$. One can easily show that $\q^{R/xR}(M/xM, N/xN)<\infty$; see, for example, \cite[Page 140, Lemma 2(iii)]{Mat}. Furthermore, we have that $\rpd_{R/xR}(M/xM)\leq \rpd_{R}(M)<\infty$; see \ref{rdim2}(ii). As $R/xR$ is a Cohen-Macaulay local ring with $\dim(R/xR)=d-1<d$, the induction hypothesis on $d$ applied to the pair $(M/xM, N/xN)$ over $R/xR$ shows that the derived depth formula holds for $M/xM$ and $N/xN$ over $R/xR$. Thus the derived depth formula holds for $M$ and $N$ due to \ref{p27}.

\emph{Case} 2: Assume $\depth_R(N)=0$. To deal with this case, we set $\rpd_R(M)=c$ and proceed by induction on $c$. 
If $c=0$, then $\pd_R(M)<\infty$ so that the derived depth formula holds for $M$ and $N$; see \ref{rdim} and \ref{cxp}(vi). So we assume $c\geq 1$. Then it follows from \ref{rdim1}(i) that there is a short exact sequence of $R$-modules $0 \to M^{\oplus a} \to K \to \Omega_R^n M^{\oplus b} \to 0$, where $a\geq 1$, $b\geq 1$, and $n\geq 0$ are integers, such that $\rpd_R(K) < \rpd_R(M)$. Then, in view of \ref{lem2.1intro}, the induction hypothesis implies that the derived depth formula holds for $K$ and $N$. Also we know that $d=\depth_R(M)=\depth_R(K)$; see \ref{lem2.1intro}(i). Hence it suffices to show $\depth_R(M\ltensor_RN)=\depth_R(K\ltensor_RN)$. 

We observe that $\depth_R(K\ltensor_RN)=0$ since the derived depth formula holds for $K$ and $N$, that is, $$d=d+0=\depth_R(K)+\depth_R(N) = \depth(R)+\depth_R(K\ltensor_RN)=d+\depth_R(K\ltensor_RN).$$

As $\depth_R(M)\leq \depth(R)$, and since the derived depth formula holds for $K$ and $N$ over $R$, we have by \ref{rmkmain} that 
$\depth_R(M \ltensor_R N)\geq  \min \{\depth_R(M \ltensor_R N), \depth_R(N)\}=\depth_R(K \ltensor_R N)=0$. Therefore we aim to prove $0=\depth_R(K \ltensor_R N) \geq \depth_R(M \ltensor_R N)$.

Consider a syzygy short exact sequence
\begin{equation} \tag{\ref{mainthm}.1}
0 \to \Omega_RN \to F \to N \to 0,
\end{equation}
where $F$ is a (finitely generated) free $R$-module. As $\depth_R(F)=\depth(R)=d\geq 2>0=\depth_R(N)$, it follows by the depth lemma that $\depth_R(\Omega_RN)=1$. So we can use Case 1 with the pair $(M, \Omega_R N)$, and conclude that $\depth_R(M \ltensor_R \Omega_RN)=\depth_R(M)+\depth_R(\Omega_RN)-d=1$ (recall that
$\depth_R(M)=d)$.

We apply the functor $M \ltensor_R -$ to the exact triangle induced from (\ref{mainthm}.1), and obtain the exact triangle:
\begin{equation} \tag{\ref{mainthm}.2}
M \ltensor_R \Omega_RN \to M \ltensor_R F \to M \ltensor_R N \to (M \ltensor_R \Omega N)[1].
\end{equation}
 
Suppose $\depth_R(M \ltensor_R N)\geq \depth_R(M \ltensor_R \Omega_RN)=1$. Then the derived depth lemma \ref{cxp}(iv)(a) gives the following contradiction $$1=\depth_R(M \ltensor_R \Omega_RN)\geq \min\big\{ \depth_R(M \ltensor_R F), \depth_R(M \ltensor_R N)+1\big\} \geq 2.$$
Consequently, $\depth_R(M \ltensor_R N)<\depth_R(M \ltensor_R \Omega_RN)=1$, that is, $\depth_R(M \ltensor_R N)\leq 0$. This establishes the claim when $\depth_R(N)=0$, and completes the proof of the theorem by induction on $d$.
\end{proof}

We do not know whether the conclusion of Theorem \ref{mainthm} holds over rings that are not necessarily Cohen-Macaulay. Hence the following question is natural:

\begin{ques} Let $R$ be a local ring and let $M$ and $N$ be $R$-modules. If $\rcid_R(M)<\infty$ and $\q^R(M,N)<\infty$, then must the derived depth formula hold for $M$ and $N$?
\end{ques}

\section{Some corollaries and further results}

In this section we obtain some corollaries of Proposition \ref{mainprop} and Theorem \ref{mainthm}, and also consider further consequences of our results from sections 2 and 3. 

%

Auslander \cite{Au} proved the following general version of the depth formula: If $R$ is a local ring, and $M$ and $N$ are $R$-modules such that $\q^R(M,N)<\infty$, then $$\depth_R(M) + \depth_R(N) = \depth(R)+\depth_R\Big(\Tor^R_{\q^R(M,N)}(M,N)\Big)-\q^R(M,N)$$ provided that $\q^R(M,N)=0$, or $\depth_R\Big(\Tor^R_{\q^R(M,N)}(M,N)\Big)\leq 1$. This version of the depth formula was also studied in the literature. One such result is due to Bergh-Jorgensen; see \cite[page 3]{BJD} for the definition of upper reducing degree $\reddeg M$ of $M$.

\begin{chunk} (Bergh-Jorgensen \cite[3.1]{BJD}; see also \cite[3.4(i)]{Be}) \label{thmBJ2} Let $R$ be a Cohen-Macaulay local ring and let $M$ and $N$ be $R$-modules such that $\q^R(M,N)<\infty$. Assume at least one of the following conditions holds:
\begin{enumerate}[\rm(i)] 
\item $\depth_R\Big(\Tor^R_{\q^R(M,N)}(M,N)\Big)=0$.
\item $\q^R(M,N)\geq 1$, $\depth_R\Big(\Tor^R_{\q^R(M,N)}(M,N)\Big)\leq 1$, and $\reddeg M \geq 2$.
\end{enumerate}
If $M$ has reducible complexity, then $$\depth_R(M) + \depth_R(N) = \depth(R)+\depth_R\Big(\Tor^R_{\q^R(M,N)}(M,N)\Big)-\q^R(M,N).$$
\end{chunk}

Next, as a consequence of Proposition \ref{mainprop}(iii) and Theorem \ref{mainthm}, we generalize \ref{thmBJ2}:

\begin{cor}\label{cor5.3} Let $R$ be a Cohen-Macaulay local ring and let $M$ and $N$ be $R$-modules such that $\q^R(M,N)<\infty$ and $\depth_R\Big(\Tor^R_{\q^R(M,N)}(M,N)\Big)\leq 1$. 
If $\rcid_R(M) < \infty$ (for example, $M$ has reducible complexity), then $$\depth_R(M) + \depth_R(N) = \depth(R)+\depth_R\Big(\Tor^R_{\q^R(M,N)}(M,N)\Big)-\q^R(M,N).$$
\end{cor}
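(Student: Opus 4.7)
The strategy is to reduce the corollary to the combination of two already-established ingredients from the paper: the derived depth formula of Theorem \ref{mainthm}, which rewrites the left-hand side in terms of $\depth_R(M \ltensor_R N)$, and the depth computation of \ref{rmk:ss}, which rewrites $\depth_R(M \ltensor_R N)$ in terms of $\depth_R(\Tor_q^R(M,N))$ and $q$. Both ingredients apply under precisely the hypotheses imposed, so the corollary follows by substitution. Set $q = \q^R(M,N)$.

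First I would dispose of the trivial cases by assuming both $M$ and $N$ are nonzero, since otherwise $q = -\infty$ by the convention $\depth_R(0) = \infty$ and the formula is vacuous. Since $R$ is Cohen-Macaulay, $\q^R(M,N) < \infty$, and $\rcid_R(M) < \infty$, Theorem \ref{mainthm} applies and yields the derived depth formula
$$\depth_R(M) + \depth_R(N) = \depth(R) + \depth_R(M \ltensor_R N).$$
(Equivalently, one could appeal directly to Proposition \ref{mainprop}(iii), observing that in a Cohen-Macaulay ring the inequality $\depth_R(M) \le \depth(R)$ is automatic from $\depth_R(M) \le \dim(R) = \depth(R)$, and that $\depth_R(\Tor_q^R(M,N)) \le 1$ is hypothesized.)

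To translate the right-hand side, I would apply \ref{rmk:ss} to the complex $X = M \ltensor_R N$. Since $q < \infty$, one has $\sup(X) = q$ and $\rH_q(X) = \Tor_q^R(M,N)$; the hypothesis $\depth_R(\Tor_q^R(M,N)) \le 1$ is exactly what \ref{rmk:ss} requires. It follows that
$$\depth_R(M \ltensor_R N) = \depth_R\bigl(\Tor_q^R(M,N)\bigr) - q.$$
Substituting this expression into the derived depth formula above yields the claimed identity.

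There is no genuinely difficult step remaining once Theorem \ref{mainthm} and \ref{rmk:ss} are granted; the entire technical content has been absorbed upstream. The only care needed is bookkeeping: in the Tor-independent case $q = 0$ the corollary recovers the classical depth formula (since $\Tor_0^R(M,N) = M \otimes_R N$ and $\depth_R(M \ltensor_R N) = \depth_R(M \otimes_R N)$), while for $q \ge 1$ it reproduces and generalizes the Bergh--Jorgensen statement \ref{thmBJ2} by replacing the reducible complexity hypothesis with the weaker $\rcid_R(M) < \infty$ and simultaneously removing the $\reddeg M \ge 2$ constraint.
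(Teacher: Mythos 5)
Your proof is correct and follows essentially the same route as the paper's: both apply \ref{rmk:ss} to $X = M \ltensor_R N$ to identify $\depth_R(M \ltensor_R N)$ with $\depth_R(\Tor_q^R(M,N)) - q$, and both then invoke the derived depth formula (the paper cites Proposition \ref{mainprop}(iii) directly; you cite Theorem \ref{mainthm}, noting the equivalence) to finish by substitution. The only cosmetic difference is that you explicitly dispatch the degenerate case where $M$ or $N$ is zero, which the paper leaves implicit.
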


\begin{proof} We set $X=M \ltensor_R N$ and $q=\q^R(M,N)$. Then $\depth_R\big(\rH_q(X)\big)=\depth_R\Big(\Tor^R_{q}(M,N)\Big)\leq 1$ and $\sup(X)=q<\infty$. Hence \ref{rmk:ss} implies that $\depth_R(X) = \depth_R\Big(\Tor^R_{q}(M,N)\Big)-q$.  As we know by Proposition \ref{mainprop}(iii) that the derived depth formula holds for $M$ and $N$, the claim follows.
\end{proof}


\begin{rmk} \label{dfinfo} Sadeghi \cite[3.4(ii)]{Sa} proved that the conclusion of Corollary \ref{cor5.3} holds over local rings that are not necessarily Cohen-Macaulay provided that $1\leq \q^R(M,N)\leq \infty$ and $M$ has reducible complexity. Corollary \ref{cor5.3} is also due to Bergh \cite[3.4(i)]{Be} in case $\depth_R\Big(\Tor^R_{\q^R(M,N)}(M,N)\Big)=0$ and $M$ has reducible complexity. \qed
\end{rmk}


We proceed and obtain some consequences of our results from sections 2 and 3. These consequences do not directly imply the depth formula holds, but they contribute to the study of depth of tensor products of modules. 

For their study of the depth formula, Bergh-Jorgensen \cite{BJD} proved the following results.

\begin{chunk} (Bergh-Jorgensen \cite[3.2 and 3.7]{BJD}) \label{thmBJ3} Let $R$ be a Cohen-Macaulay local ring and let $M$ and $N$ be nonzero $R$-modules such that $M$ and $N$ are Tor-independent, that is, $\q^R(M,N)=0$. Assume $M$ has reducible complexity. Assume further $\depth_R(M\otimes_RN)\neq 0$. Then the following hold:
\begin{enumerate}[\rm(i)] 
\item $\depth_R(M)\neq 0$.
\item If $M$ is maximal Cohen-Macaulay and $\Gdim_R(N)<\infty$, then $\depth_R(N)\neq 0$. \qed
\end{enumerate}
\end{chunk}

Thanks to Theorem \ref{mainthm}, we can generalize \ref{thmBJ3} as follows:

\begin{cor}\label{thmBJ3improved} Let $R$ be a Cohen-Macaulay local ring and let $M$ and $N$ be nonzero $R$-modules such that $M$ and $N$ are Tor-independent, that is, $\q^R(M,N)=0$. Assume $\rcid_R(M)<\infty$ (for example, $M$ has reducible complexity). Assume further $\depth_R(M\otimes_RN)\neq 0$. Then $\depth_R(M)\neq 0$ and $\depth_R(N)\neq 0$.
\end{cor}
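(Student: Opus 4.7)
The plan is to apply Theorem \ref{mainthm} directly and then exploit the Cohen-Macaulay hypothesis via a simple inequality on depths.

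First, since $\q^R(M,N)=0$, the derived tensor product $M\ltensor_R N$ is quasi-isomorphic to the ordinary tensor product $M\otimes_R N$ concentrated in homological degree $0$, so $\depth_R(M\ltensor_R N)=\depth_R(M\otimes_R N)$. The hypotheses of Theorem \ref{mainthm} (that $R$ is Cohen-Macaulay, $\q^R(M,N)<\infty$, and $\rcid_R(M)<\infty$) are met, so the derived depth formula holds for $M$ and $N$; specializing it to the Tor-independent setting, I obtain the classical depth formula
\[
\depth_R(M)+\depth_R(N)=\depth(R)+\depth_R(M\otimes_R N).
\]

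Next I would use the fact that $R$ is Cohen-Macaulay, so $\depth(R)=\dim(R)$, together with the standard inequality $\depth_R(L)\le \dim(R)$ valid for any nonzero finitely generated module $L$ over a local ring. Combined with the hypothesis $\depth_R(M\otimes_R N)\ge 1$, the depth formula rearranges to
\[
\depth_R(M)=\depth(R)+\depth_R(M\otimes_R N)-\depth_R(N)\ge \depth(R)+1-\dim(R)=1,
\]
and symmetrically $\depth_R(N)\ge 1$. Thus both $\depth_R(M)$ and $\depth_R(N)$ are nonzero, as required.

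The only point needing care is the symmetry of the depth formula in $M$ and $N$; since Theorem \ref{mainthm} is invoked with $M$ (the module of finite reducing CI-dimension) as the distinguished argument, the formula itself is symmetric in $\depth_R(M)$ and $\depth_R(N)$, so the same bound automatically yields the conclusion for both modules. No obstacle is expected beyond verifying this symmetric bookkeeping; the argument is essentially a one-line consequence of the new depth formula combined with the Cohen-Macaulay depth bound.
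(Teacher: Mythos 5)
Your proof is correct and follows essentially the same route as the paper: both apply Theorem \ref{mainthm} to obtain the depth formula and then use the Cohen--Macaulay fact $\depth(R)=\dim(R)$ together with the bound $\depth_R(L)\le\dim(R)$ for a nonzero finitely generated module $L$. The only cosmetic difference is that the paper phrases the final step as a contradiction (if $\depth_R(M)=0$ then $\depth_R(N)>\dim(R)$) whereas you rearrange the formula directly to get $\depth_R(M)\ge 1$; the content is identical.
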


\begin{proof} We know by Theorem \ref{mainthm} that $\depth_R(M) + \depth_R(N) = \depth(R) + \depth_R(M \ltensor_RN)$. As $\q^R(M,N)=0$, it follows that $\depth_R(M) + \depth_R(N) = \depth(R) + \depth_R(M \otimes_RN)$. Therefore, if $\depth_R(M)=0$, then $\depth_R(N) = \depth(R) + \depth_R(M\otimes_RN)>\depth(R)=\dim(R)$, which is not possible. Hence $\depth_R(M)\neq 0$. Similarly, it follows that $\depth_R(N)\neq 0$.
\end{proof}

The preliminary results recorded in section 2 yield new results on the vanishing of Ext and Tor. As a demonstration, we use \ref{rdim2} and \ref{lem2.1intro}, and prove the following; see \cite[3.3]{Be} for a similar result for modules with reducible complexity.

\begin{prop}\label{rmk5.4} Let $M$ and $N$ be nonzero $R$-modules such that $\q^R(M,N)<\infty$. 
\begin{enumerate}[\rm(i)]
\item If $\rcid_R(M)<\infty$, then $\q^R(M,N) \leq \depth(R)$.
\item If $\rcid_R(M)<\infty$ and $\depth_R(M)\leq \depth(R)$, then $$\depth(R)-\depth_R(M)-\depth_R(N) \leq \q^R(M,N) \leq \depth(R)-\depth_R(M).$$
\end{enumerate}
\end{prop}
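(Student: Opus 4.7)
The plan is to prove both parts by induction on $\rpd_R(M)$, which is finite by \ref{rdim2}(i). At each inductive step, I would invoke \ref{rdim1}(i) to produce a short exact sequence
\[ 0 \to M^{\oplus a} \to K \to \Omega^n_R M^{\oplus b} \to 0 \]
with $\rpd_R(K) < \rpd_R(M)$, and then use \ref{lem2.1intro} to transport the relevant depth and Tor-rigidity data from $M$ to $K$ so that the inductive hypothesis applies to $K$.

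For part (i), the base case is $\pd_R(M) < \infty$. The Auslander--Buchsbaum formula yields $\pd_R(M) = \depth(R) - \depth_R(M) \le \depth(R)$, and since $\Tor_i^R(M,N)=0$ for $i > \pd_R(M)$, we conclude $\q^R(M,N) \le \pd_R(M) \le \depth(R)$. For the inductive step, \ref{lem2.1intro}(ii) gives $\q^R(K,N) = \q^R(M,N) < \infty$; applying the inductive hypothesis to $K$ (noting $\rpd_R(K) < \infty$, hence $\rcid_R(K) < \infty$ by \ref{rdim2}(i)) finishes the argument.

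For part (ii), the upper bound $\q^R(M,N) \le \depth(R) - \depth_R(M)$ runs through the same induction: the base case produces $q \le \pd_R(M) = \depth(R) - \depth_R(M)$, and the inductive step uses \ref{lem2.1intro}(i) and (ii) to preserve both $\depth_R(M)$ and $\q^R(M,N)$ when passing to $K$. For the lower bound $\depth(R) - \depth_R(M) - \depth_R(N) \le \q^R(M,N)$, I would again induct on $\rpd_R(M)$. The base case $\pd_R(M) < \infty$ is settled by the derived depth formula from \ref{cxp}(vi), which yields
\[ \depth_R(M \ltensor_R N) = \depth_R(M) + \depth_R(N) - \depth(R), \]
combined with the general inequality $\depth_R(M \ltensor_R N) \ge -\sup(M \ltensor_R N) = -\q^R(M,N)$ from \ref{cxp}(iii); rearranging gives the desired bound. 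The inductive step is identical in form to that of the upper bound, transferring the hypothesis $\depth_R(M) \le \depth(R)$ to $K$ via $\depth_R(K) = \depth_R(M)$.

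The argument is essentially routine once \ref{lem2.1intro} and \ref{cxp}(iii),(vi) are available; no real obstacle arises. The subtlest verification is that the inductive hypothesis continues to apply to $K$ in part (ii), which reduces to checking that $\depth_R(K) = \depth_R(M) \le \depth(R)$ (by \ref{lem2.1intro}(i), which requires precisely the assumption $\depth_R(M)\le\depth(R)$) and that $\rpd_R(K)<\infty$; both are immediate from the construction. Notably, this proof never invokes the Cohen--Macaulay hypothesis, so the proposition is stated (and provable) in full generality.
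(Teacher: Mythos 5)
Your proof is correct and follows the same strategy as the paper's: reduce to a module of finite projective dimension via the reducing $\pd$-sequence, using \ref{lem2.1intro} to transfer $\q^R$ and depth data at each step, then close with Auslander--Buchsbaum together with \ref{cxp}(iii) and \ref{cxp}(vi). The paper walks the full reducing sequence $\{K_0,\ldots,K_r\}$ in a single pass rather than inducting on $\rpd_R(M)$, but this is purely presentational; your observation that the Cohen--Macaulay hypothesis is never used is also consistent with the paper, whose statement of this proposition already omits it.
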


\begin{proof} Assume $\rcid_R(M)<\infty$. It follows from \ref{rdim2}(ii) that $\rpd_R(M)< \infty$. Let $\{K_0,\ldots,K_r\}$ be a reducing $\pd$-sequence of $M$; see \ref{rdim}. Then $\pd(K_r)<\infty$ and $\q^R(M,N)=\q^R(K_r,N)$; see \ref{lem2.1intro}(ii). Therefore, part(i) follows since we have
\begin{equation}\tag{\ref{rmk5.4}.1}
\q^R(M,N)=\q^R(K_r,N)\leq\pd_R(K_r)=\depth(R)-\depth_R(K_r)\leq \depth(R).
\end{equation}


Next assume $\depth_R(M)\leq \depth(R)$. Then $\depth_R(M)=\depth(K_r)$ so that (\ref{rmk5.4}.1) yields the inequality $\q^R(M,N) \leq \depth(R)-\depth_R(M)$; see \ref{lem2.1intro}(i). As $\pd_R(K_r)<\infty$, the derived depth formula holds for the pair $(K_r,N)$; see \ref{cxp}(vi). Hence we have:
\begin{equation}\tag{\ref{rmk5.4}.2}
\depth_R(K_r\ltensor_RN) = \depth_R(K_r) + \depth_R(N)- \depth(R).
\end{equation}
Furthermore \ref{cxp}(iii) implies that:
\begin{equation}\tag{\ref{rmk5.4}.3}
\depth_R(K_r\ltensor_RN) \geq-\q^R(M,N).
\end{equation}
Thus (\ref{rmk5.4}.2) and (\ref{rmk5.4}.3) show that $\q^R(M,N) \geq \depth(R) - \depth_R(M) - \depth_R(N)$. Consequently, part (ii) follows.
\end{proof}

We record some corollories of Proposition \ref{rmk5.4}. Part (i) of the next corollary is known by \cite[2.3]{Jor} if $\cid_R(M)<\infty$, while part (ii) follows from \cite[1.1]{Aus} if $\pd_R(M)<\infty$.

\begin{cor} \label{Torvanish} Let $M$ and $N$ be $R$-modules such that $\q^R(M,N)<\infty$ and $\rcid_R(M)<\infty$. 
\begin{enumerate}[\rm(i)]
\item If $\depth(R)=\depth_R(M)$, then $\q^R(M,N)=0$.
\item If $\depth(R)\geq \depth_R(M)$ and $\depth_R(N)=0$, then $\q^R(M,N)=\depth(R)-\depth_R(M)$.
\end{enumerate}
\end{cor}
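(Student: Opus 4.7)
Both parts of the corollary follow by directly invoking Proposition \ref{rmk5.4}(ii), so the plan is simply to verify its hypotheses and read off the consequence. In each case the hypothesis $\depth_R(M) \leq \depth(R)$ is present: it is the equality case in (i) and is assumed outright in (ii). Combined with the standing assumptions $\q^R(M,N) < \infty$ and $\rcid_R(M) < \infty$, Proposition \ref{rmk5.4}(ii) yields the two-sided bound
\[
\depth(R) - \depth_R(M) - \depth_R(N) \;\leq\; \q^R(M,N) \;\leq\; \depth(R) - \depth_R(M).
\]

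For part (i), the hypothesis $\depth(R) = \depth_R(M)$ reduces the upper bound to $\q^R(M,N) \leq 0$. If either $M$ or $N$ is zero the conclusion is trivial; otherwise Nakayama's lemma ensures $M \otimes_R N \neq 0$, so $\Tor_0^R(M,N) \neq 0$ and therefore $\q^R(M,N) \geq 0$, forcing the equality $\q^R(M,N) = 0$. For part (ii), substituting $\depth_R(N) = 0$ collapses the lower and upper bounds to the same quantity $\depth(R) - \depth_R(M)$, so $\q^R(M,N) = \depth(R) - \depth_R(M)$ as claimed.

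There is no substantive obstacle here; the corollary is a straightforward specialization of Proposition \ref{rmk5.4}(ii). The genuine content resides in that proposition, whose proof in turn leverages Theorem \ref{mainthm} (the derived depth formula for modules of finite reducing complete intersection dimension) together with the compatibility of reducing $\pd$-sequences with the passage to a suitable finite projective dimension module $K_r$ at the end of the sequence. In other words, once Theorem \ref{mainthm} is in hand, the present corollary is essentially bookkeeping with the two bounds.
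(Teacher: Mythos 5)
Your proof is correct and matches the paper's approach exactly: the paper likewise derives the corollary as an immediate consequence of Proposition \ref{rmk5.4}(ii), with part (i) following from the upper bound (and nonvanishing of $\Tor_0$) and part (ii) from the collapse of the two-sided bound when $\depth_R(N)=0$. The extra remark about the zero-module edge case and Nakayama is harmless bookkeeping that the paper leaves implicit.
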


\begin{proof} The claims follow immediately from Proposition \ref{rmk5.4}(ii).
\end{proof}

\begin{rmk} Let $M$ and $N$ be $R$-modules such that $\q^R(M,N)<\infty$ and $\CI_R(M)<\infty$. It follows that $\q^R(M,N)\leq \CI_R(M)=\depth(R)-\depth_R(M)$; see \cite[2.3]{Jor}. Moreover, if $\depth_R(N)=0$, then $\q^R(M,N)=\depth(R)-\depth_R(M)$ by, for example, Corollary \ref{Torvanish}.
\end{rmk}

Let $M$ and $N$ be $R$-modules such that $N\neq 0$ and $\p^R(M,N)=\sup\{i: \Ext^i_R(M,N)\neq 0\}$. Assume $\rpd_R(M)<\infty$ and $\{K_0,\ldots,K_r\}$ is a reducing $\pd$-sequence of $M$. Then it can be proved along the same lines as in \ref{lem2.1intro}(ii) that $\p^R(M,N)=\p^R(K_r,N)$. As $\pd_R(K_r)<\infty$, it follows from \cite[4.10]{MO} that $\p^R(K_r,N)=\depth(R)-\depth_R(K_r)$. Now, in view of \ref{lem2.1intro}(i), the following result holds:

%
%
\begin{prop} \label{propExt} Let $M$ and $N$ be $R$-modules such that $N\neq 0$, $\depth_R(M)\leq \depth(R)$ and $\p^R(M,N)<\infty$. If $\rcid_R(M)<\infty$, then $\p^R(M,N)=\depth(R)-\depth_R(M)$.
\end{prop}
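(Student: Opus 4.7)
The plan is to reduce $M$ through a reducing $\pd$-sequence to a module of finite projective dimension, where a classical Auslander--Buchsbaum-type formula for Ext applies; the hypothesis $\depth_R(M)\leq \depth(R)$ ensures that the depth is preserved under this reduction. Concretely, since $\rcid_R(M)<\infty$, I first invoke \ref{rdim2}(i) to get $\rpd_R(M)<\infty$, and then pick a reducing $\pd$-sequence $\{K_0,K_1,\ldots,K_r\}$ of $M$ with $K_0=M$ and $\pd_R(K_r)<\infty$.

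The substantive step is an Ext-analog of \ref{lem2.1intro}(ii): for any short exact sequence
\[
0\to L^{\oplus a}\to K\to \Omega^n_R L^{\oplus b}\to 0
\]
with $\p^R(L,N)<\infty$, one has $\p^R(L,N)=\p^R(K,N)$. I would prove this by applying $\Hom_R(-,N)$ and using the standard shift $\Ext^i_R(\Omega^n_R L,N)\cong \Ext^{i+n}_R(L,N)$ (valid for all $i\geq 1$, regardless of whether the syzygy is minimal, since free summands contribute zero to positive-degree Ext). Inspection of the long exact sequence in degrees $i>\p^R(L,N)$ yields $\Ext^i_R(K,N)=0$; at degree $q=\p^R(L,N)$, a short case analysis (separately for $n\geq 1$ and $n=0$, and for $q\geq 1$ versus $q=0$) shows $\Ext^q_R(K,N)$ is sandwiched between copies of $\Ext^q_R(L,N)$ in such a way that it must be nonzero. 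Applying this equality iteratively along the reducing sequence gives $\p^R(M,N)=\p^R(K_r,N)$.

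Since $\pd_R(K_r)<\infty$ and $N\neq 0$, the cited result \cite[4.10]{MO} supplies $\p^R(K_r,N)=\depth(R)-\depth_R(K_r)$. To rewrite the right-hand side in terms of $\depth_R(M)$, I would apply \ref{lem2.1intro}(i) inductively: from $\depth_R(K_0)=\depth_R(M)\leq \depth(R)$ we obtain $\depth_R(K_1)=\depth_R(M)$, and since this common value remains at most $\depth(R)$ we may repeat, so $\depth_R(K_i)=\depth_R(M)$ for every $i\leq r$. Combining these identities yields $\p^R(M,N)=\depth(R)-\depth_R(M)$, as required. The main obstacle I anticipate is the Ext-analog above; although conceptually parallel to \ref{lem2.1intro}(ii), it requires careful attention at the top degree $q$ (particularly in the edge cases $n=0$ or $q=0$) to ensure $\Ext^q_R(K,N)$ does not collapse. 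Once this invariance is in place, the remainder is a direct chain of equalities running through the reducing sequence.
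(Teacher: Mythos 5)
Your proof is correct and follows essentially the same route as the paper's: pass from $\rcid_R(M)<\infty$ to $\rpd_R(M)<\infty$ via \ref{rdim2}(i), establish the Ext-analog of \ref{lem2.1intro}(ii) along the reducing $\pd$-sequence to get $\p^R(M,N)=\p^R(K_r,N)$, apply \cite[4.10]{MO} to $K_r$, and convert $\depth_R(K_r)$ to $\depth_R(M)$ by iterating \ref{lem2.1intro}(i). The paper leaves the Ext-analog as a one-line remark, whereas you correctly note that the key observation is the shift $\Ext_R^{q+1}(\Omega^n_R L,N)\cong\Ext_R^{q+1+n}(L,N)=0$ making $\Ext_R^q(K,N)\twoheadrightarrow\Ext_R^q(L^{\oplus a},N)\neq 0$; this is the same underlying mechanism.
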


\begin{rmk} Proposition \ref{propExt} allows us to find examples of modules that do not have finite reducing complete intersection dimension. For example, Jorgensen and Sega \cite[page 475, 476 and 3.3]{NVan} proved that there exists an Artinian Gorenstein local ring $R$ and $R$-modules $M$ and $N$ such that $\Ext^1_R(M,N)\neq 0$ and $\Ext^i_R(M,N)=0$ for all $i\geq 2$. Therefore Proposition \ref{propExt} implies that $\rcid_R(M)=\infty$. \qed
\end{rmk}

%

%
%

Recall that an $R$-module $M$ is torsion, that is, $M$ equals its torsion submodule, if and only if $M_{\fp}=0$ for all $\fp \in \Ass(R)$. The next result is to be compared with \cite[A2]{GORS2}. 

\begin{cor} \label{torsioncor} Let $M$ and $N$ be $R$-modules. Assume $\rcid_R(M)<\infty$. If $\Tor_i^R(M,N)$ is torsion for all $i\gg 0$, then $\Tor_i^R(M,N)$ is torsion for all $i\geq 1$.
\end{cor}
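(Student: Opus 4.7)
The plan is to reduce to a localization statement at associated primes and then invoke Proposition \ref{rmk5.4}(i). Recall that an $R$-module $T$ is torsion precisely when $T_\fp = 0$ for every $\fp \in \Ass(R)$, and that $\depth(R_\fp) = 0$ for each such $\fp$. Accordingly, fix $\fp \in \Ass(R)$; it suffices to show $\Tor_i^{R_\fp}(M_\fp, N_\fp) = 0$ for every $i \geq 1$.

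First I would dispose of the trivial cases where $M_\fp = 0$ or $N_\fp = 0$, in which every Tor is zero by flatness of localization. In the remaining case, the hypothesis that $\Tor_i^R(M,N)$ is torsion for all $i \gg 0$ gives, after localizing,
\[
\Tor_i^{R_\fp}(M_\fp, N_\fp) \;\cong\; \bigl(\Tor_i^R(M,N)\bigr)_\fp \;=\; 0 \quad \text{for all } i \gg 0,
\]
so $\q^{R_\fp}(M_\fp, N_\fp) < \infty$. Lemma \ref{lemrpd} applied to $\Hdim = \cid$ yields $\rcid_{R_\fp}(M_\fp) \leq \rcid_R(M) < \infty$.

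Now Proposition \ref{rmk5.4}(i) applied over the local ring $R_\fp$ to the pair $(M_\fp, N_\fp)$ produces the bound $\q^{R_\fp}(M_\fp, N_\fp) \leq \depth(R_\fp) = 0$, which forces $\Tor_i^{R_\fp}(M_\fp, N_\fp) = 0$ for every $i \geq 1$. Since this holds for every $\fp \in \Ass(R)$, the module $\Tor_i^R(M,N)$ is torsion for all $i \geq 1$, as desired. No serious obstacle is anticipated: this is a direct localize-and-bound argument, and the only subtle point is that the finiteness hypothesis $\rcid_R(M) < \infty$ localizes correctly, which is precisely the content of Lemma \ref{lemrpd}.
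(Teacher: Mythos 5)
Your proof is correct and follows essentially the same route as the paper: localize at an associated prime, use Lemma \ref{lemrpd} to pass $\rcid_R(M)<\infty$ down to the localization, and invoke Proposition \ref{rmk5.4}(i) with $\depth(R_\fp)=0$ to conclude $\q^{R_\fp}(M_\fp,N_\fp)=0$. Your explicit handling of the degenerate cases $M_\fp=0$ or $N_\fp=0$ (which are excluded by the nonzero hypothesis in Proposition \ref{rmk5.4}) is a minor but sensible addition that the paper leaves implicit.
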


\begin{proof} Assume $\Tor_i^R(M,N)$ is torsion for all $i\gg 0$, and let $\fp \in \Ass(R)$. Then $\q^{R_{\fp}}(M_{\fp}, N_{\fp})<\infty$. Also $\rcid_{R_{\fp}}(M_{\fp})<\infty$ by Lemma \ref{lemrpd}. So Proposition \ref{rmk5.4}(i) shows that $\q^{R_{\fp}}(M_{\fp}, N_{\fp}) \leq \depth(R_{\fp})=0$, that is, $\q^{R_{\fp}}(M_{\fp}, N_{\fp})=0$. This proves that $\Tor_i^R(M,N)$ is torsion for all $i\geq 1$.
\end{proof}

\begin{cor}\label{olsunmu} Let $R$ be a one-dimensional Cohen-Macaulay local ring and let $M$ and $N$ be nonzero $R$-modules. Assume $\q^R(M,N)<\infty$ and $\rcid_R(M)<\infty$. Then the following are equivalent:
\begin{enumerate}[\rm(i)]
\item $\q^R(M,N)=0$.
\item $M$ or $N$ is torsion-free.
\item The depth formula holds, that is, $\depth_R(M)+\depth_R(N)=\depth(R)+\depth_R(M\otimes_RN)$.
\end{enumerate}
\end{cor}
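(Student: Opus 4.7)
The plan is to prove the cycle (i)$\Rightarrow$(iii)$\Rightarrow$(ii)$\Rightarrow$(i), using the standard dictionary that over a one-dimensional Cohen-Macaulay local ring a nonzero finitely generated module is torsion-free precisely when its depth equals $\depth(R)=1$.

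The implication (i)$\Rightarrow$(iii) is immediate from Theorem~\ref{mainthm}: since $\q^R(M,N)=0$ the complex $M\ltensor_R N$ is quasi-isomorphic to $M\otimes_R N$, so the derived depth formula of Theorem~\ref{mainthm} collapses to the ordinary depth formula. For (iii)$\Rightarrow$(ii), the depth formula reads $\depth_R M + \depth_R N = 1 + \depth_R(M\otimes_R N) \ge 1$; since each summand is at most $1$, at least one of them equals $1$, exhibiting a torsion-free module among $M$ and $N$.

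The main work is (ii)$\Rightarrow$(i), which I handle by splitting into cases. If $M$ is torsion-free, then $\depth_R M = 1 = \depth R$ and Corollary~\ref{Torvanish}(i) directly gives $\q^R(M,N)=0$. Otherwise $N$ is torsion-free while $M$ need not be; set $q = \q^R(M,N)$, so Proposition~\ref{rmk5.4}(i) forces $q \le 1$, and I argue that $q=1$ is impossible. Assuming $q=1$, Theorem~\ref{mainthm} gives $\depth_R(M\ltensor_R N) = \depth_R M + \depth_R N - 1 = \depth_R M$, while \ref{cxp}(ii) bounds $\depth_R(M\ltensor_R N) \le \dim R - \sup(M\ltensor_R N) = 0$; hence $\depth_R M = 0$ and $\depth_R(M\ltensor_R N) = 0$. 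Since every finitely generated $R$-module has depth at most $\dim R = 1$, \ref{rmk:ss} applies and yields $\depth_R \Tor_1^R(M,N) = \depth_R(M\ltensor_R N) + 1 = 1$, i.e.\ $\Tor_1^R(M,N)$ is a nonzero torsion-free $R$-module. On the other hand, $\Tor_i^R(M,N)=0$ (in particular torsion) for $i \gg 0$, so Corollary~\ref{torsioncor} forces $\Tor_1^R(M,N)$ to be torsion, a contradiction.

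The delicate point is precisely the sub-case of (ii)$\Rightarrow$(i) in which $N$, but not $M$, is torsion-free: the standing hypothesis $\rcid_R(M)<\infty$ is asymmetric, so no direct symmetric depth computation pins down $q$. The contradiction above is engineered by playing the sharp derived-depth identity~\ref{rmk:ss} against the torsion-propagation statement of Corollary~\ref{torsioncor}, squeezing $\Tor_1^R(M,N)$ between being torsion-free (a consequence of the derived depth formula) and being torsion (a consequence of eventual Tor-vanishing).
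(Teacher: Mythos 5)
Your proof is correct, and it reaches the same contradiction as the paper, but by a somewhat longer path. The implications (i)$\Rightarrow$(iii) and (iii)$\Rightarrow$(ii) match the paper exactly. For (ii)$\Rightarrow$(i), the paper avoids the case split entirely: after using Proposition~\ref{rmk5.4}(i) to get $q \le 1$, it assumes $q=1$ and invokes Corollary~\ref{cor5.3} (the general depth formula, already encapsulating the application of \ref{rmk:ss} that you redo by hand) to obtain
\begin{equation*}
\depth_R(M) + \depth_R(N) = 1 + \depth_R\bigl(\Tor_1^R(M,N)\bigr) - 1 = \depth_R\bigl(\Tor_1^R(M,N)\bigr),
\end{equation*}
then uses Corollary~\ref{torsioncor} to see $\Tor_1^R(M,N)$ is torsion (hence finite length, hence depth~$0$), forcing $\depth_R(M)+\depth_R(N)=0$, which contradicts (ii) directly. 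Your version instead splits on which factor is torsion-free, handles the easy case via Corollary~\ref{Torvanish}(i), and in the harder case re-derives (via Theorem~\ref{mainthm}, the bound in \ref{cxp}(ii), and \ref{rmk:ss}) that $\Tor_1^R(M,N)$ has depth~$1$, then collides this with torsion-ness from Corollary~\ref{torsioncor}. Both arguments hinge on the same torsion/torsion-free tension; yours locates the contradiction at the level of $\Tor_1^R(M,N)$ being simultaneously torsion and torsion-free, while the paper locates it at $\depth_R(M)+\depth_R(N)=0$. Note that your case split is actually unnecessary: since $q=1$ plus \ref{cxp}(ii) gives $\depth_R(M\ltensor_R N)\le 0$ and the derived depth formula gives $\depth_R(M)+\depth_R(N)=1+\depth_R(M\ltensor_R N)\le 1$, while (ii) guarantees $\depth_R(M)+\depth_R(N)\ge 1$, you get $\depth_R(M\ltensor_R N)=0$ without needing to know which module is torsion-free, and the rest of your Case~2 reasoning then closes the argument uniformly.
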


\begin{proof} It follows from Theorem \ref{mainthm} that part (i) implies part (iii). Now assume part (iii) holds. Then we have that $2\geq \depth_R(M)+\depth_R(N)=1+\depth_R(M\otimes_RN)\geq 1$. So $\depth_R(M)=1$ or $\depth_R(N)=1$, that is, $M$ or $N$ is torsion-free. Hence part (iii) implies part (ii). 

Next assume part (ii) holds. We know $\q^R(M,N)\leq 1$; see Proposition \ref{rmk5.4}(i). Suppose $\q^R(M,N)\neq 0$, that is, $\q^R(M,N)=1$. As $\depth_R(\Tor^R_{1}(M,N))\leq 1$, Corollary \ref{cor5.3} yields 
\begin{equation}\tag{\ref{olsunmu}.1}
\depth_R(M) + \depth_R(N) = 1+\depth_R(\Tor_1^R(M,N))-1 = \depth_R(\Tor_1^R(M,N)).
\end{equation}
It follows from Corollary \ref{torsioncor} that $\Tor_1^R(M,N)$ is torsion, that is, $\Tor_1^R(M,N)$ has finite length. Therefore (\ref{olsunmu}.1) gives the equality $\depth_R(M) + \depth_R(N) = 0$. This is not possible since $M$ or $N$ is torsion-free. Consequently $\q^R(M,N)=0$ so that part (ii) implies part (i).
\end{proof}

\begin{rmk} In Corollary \ref{olsunmu}, $M$ and $N$ are not necessarily both torsion-free. If $R$ is a one-dimensional Cohen-Macaulay local ring, $M=R/xR$ for some non zero-divisor $x$ on $R$, and $N$ is a torsion-free $R$-module, then $M$ is torsion, $\q^R(M,N)=0$, and $\depth_R(M)+\depth_R(N)=\depth(R)+\depth_R(M\otimes_RN)$.
\end{rmk}

Next we consider the dependency formula of Jorgensen which is a generalization of the classical Auslander-Buchsbaum formula (recall that $\depth_R(0)=\infty$).

\begin{chunk}(Jorgensen \cite[2.7]{JAB}) \label{JDF} Let $M$ and $N$ be nonzero $R$-modules. If $\q^R(M,N) < \infty$ and $\cid_R(M)<\infty$, then it follows:
\begin{align*}
\q^R(M,N)  & = \sup \Bigl\{  \depth(R_{\fp})-\depth_{R_\fp}(M_\fp) -\depth_{R_\fp}(N_\fp) \mid \fp \in \Supp_R\big(\Tor^R_{\q^R(M,N)}(M,N)\big) \Bigr\}.
\end{align*}
\end{chunk}

Our next result, namely Theorem \ref{thm5.5}, generalizes \ref{JDF} over Cohen-Macaulay rings and establishes Theorem \ref{propnewintro}. 
Note that Theorem \ref{thm5.5} not only improves \ref{JDF} by replacing the assumption $\cid_R(M)<\infty$ with $\rcid_R(M)<\infty$, but also it points out that $\q^R(M,N)$ can be computed by using the (finitely many) prime ideals in $\Ass_R\big(\Tor^R_{\q^R(M,N)}(M,N)\big)$.

\begin{thm}\label{thm5.5} Let $R$ be a Cohen-Macaulay local ring and let $M$ and $N$ be nonzero $R$-modules. If $\q^R(M,N) < \infty$ and $\rcid_R(M)<\infty$, then it follows:
\begin{align*}
\q^R(M,N)  & = \sup \Bigl\{  \depth(R_{\fp})-\depth_{R_\fp}(M_\fp) -\depth_{R_\fp}(N_\fp) \mid \fp \in \Spec(R)  \Bigr\}
 \\ &= \sup\Bigl\{ \depth(R_{\fp})-\depth_{R_\fp}(M_\fp) -\depth_{R_\fp}(N_\fp) \mid \fp \in \Ass_R\big(\Tor^R_{\q^R(M,N)}(M,N)\big)\Bigr\}. 
\end{align*} 
\end{thm}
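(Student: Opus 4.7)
The plan is to write $A=\q^R(M,N)$, let $B$ denote the supremum over $\Spec(R)$, and let $C$ denote the supremum over $\Ass_R(\Tor^R_A(M,N))$, and then prove the chain $A\leq C\leq B\leq A$. Since $\Ass_R(\Tor^R_A(M,N))\subseteq\Spec(R)$, the middle inequality $C\leq B$ is immediate, so the content lies in the two outer inequalities. Throughout I will use that $\Tor$ commutes with localization, that $\rcid_{R_\fp}(M_\fp)<\infty$ follows from $\rcid_R(M)<\infty$ via Lemma \ref{lemrpd} combined with \ref{rdim2}(i), and that localizations of Cohen-Macaulay rings are Cohen-Macaulay.

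For the inequality $B\leq A$, I would fix $\fp\in\Spec(R)$. If $M_\fp=0$ or $N_\fp=0$, the summand at $\fp$ is $-\infty$, so I may assume both are nonzero. Then $\q^{R_\fp}(M_\fp,N_\fp)\leq A<\infty$ and $\rcid_{R_\fp}(M_\fp)<\infty$ by the remarks above, and $\depth_{R_\fp}(M_\fp)\leq \dim(R_\fp)=\depth(R_\fp)$ automatically. Proposition \ref{rmk5.4}(ii) applied to $(M_\fp,N_\fp)$ over $R_\fp$ then yields
\[
\depth(R_\fp)-\depth_{R_\fp}(M_\fp)-\depth_{R_\fp}(N_\fp)\leq \q^{R_\fp}(M_\fp,N_\fp)\leq A,
\]
and taking the supremum over $\fp$ gives $B\leq A$.

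For the inequality $A\leq C$, set $q=A$ and pick any $\fp\in\Ass_R(\Tor^R_q(M,N))$, which exists because $\Tor^R_q(M,N)\neq 0$ by definition of $q$. Then $\Tor^{R_\fp}_q(M_\fp,N_\fp)\cong\Tor^R_q(M,N)_\fp$ is nonzero with depth zero over $R_\fp$, forcing $\q^{R_\fp}(M_\fp,N_\fp)=q$. I would then apply Theorem \ref{mainthm} over $R_\fp$ (whose hypotheses hold by the same localization check) to obtain the derived depth formula
\[
\depth_{R_\fp}(M_\fp)+\depth_{R_\fp}(N_\fp)=\depth(R_\fp)+\depth_{R_\fp}(M_\fp\ltensor_{R_\fp}N_\fp),
\]
and combine it with \ref{rmk:ss} applied to $X=M_\fp\ltensor_{R_\fp}N_\fp$: since $\sup(X)=q$ and $\depth_{R_\fp}\bigl(\rH_q(X)\bigr)=0\leq 1$, I get $\depth_{R_\fp}(X)=-q$. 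Substituting gives $q=\depth(R_\fp)-\depth_{R_\fp}(M_\fp)-\depth_{R_\fp}(N_\fp)$, so this particular $\fp$ realizes the value $q$, proving $A\leq C$.

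The main obstacle is not conceptual but bookkeeping: one must verify simultaneously at the chosen prime $\fp$ that finiteness of $\rcid$, finiteness of $\q$, the Cohen-Macaulay hypothesis, and the depth inequality required by Proposition \ref{rmk5.4}(ii) all pass to $R_\fp$, and that at an associated prime of the top Tor the supremum of the derived tensor product does not drop. Once these localization checks are in place, the derived depth formula of Theorem \ref{mainthm} together with \ref{rmk:ss} converts the depth-zero condition at $\fp$ directly into the numerical equality realizing $q$ in $C$, closing the loop.
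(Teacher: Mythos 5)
Your proof is correct and follows essentially the same architecture as the paper's: localize at an arbitrary prime to bound each summand by $\q^R(M,N)$, then localize at an associated prime of the top Tor and use \ref{rmk:ss} together with the derived depth formula to show that the bound is attained there. The only variation is in the direction $B\leq A$: the paper re-derives the estimate directly from \ref{cxp}(iii) and Theorem \ref{mainthm} applied over $R_\fp$, whereas you invoke Proposition \ref{rmk5.4}(ii) over $R_\fp$; since that proposition is proved without a Cohen--Macaulay hypothesis, your route makes slightly more transparent that this half of the argument does not itself need $R$ to be Cohen--Macaulay, though the two paths use the same underlying facts.
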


\begin{proof} We know $\rpd_R(M) < \infty$ since we assume $\rcid_R(M)<\infty$; see \ref{rdim2}(i). Let $\fp\in \Spec(R)$.
Then $\rpd_{R_\fp}(M_\fp)\leq \rpd_R(M)<\infty$ by  \Cref{lemrpd}. Set $X=M_{\fp} \ltensor_{R_\fp} N_{\fp}$. Then it follows that $\sup(X)=\q^{R_\fp}(M_\fp,N_\fp) \leq \q^R(M,N)<\infty$. Thus, by \ref{cxp}(iii), we have that $\depth_R(X)\geq -\q^{R_\fp}(M_\fp,N_\fp)$. This yields:
\begin{equation} \tag{\ref{thm5.5}.1}
\q^R(M,N) \ge \q^{R_\fp}(M_\fp,N_\fp) \ge -\depth (M_\fp \ltensor_{R_\fp} N_\fp).
\end{equation}
As $R_\fp$ is Cohen-Macaulay and $\q^{R_\fp}(M_\fp,N_\fp)<\infty$, Theorem \ref{mainthm} implies that the derived depth formula holds for $(M_\fp,N_\fp)$ over $R_{\fp}$, that is, $\depth_{R_{\fp}}(M_\fp)+\depth_{R_{\fp}}(N_\fp)=\depth(R_\fp)+\depth (M_\fp \ltensor_{R_\fp} N_\fp)$. Hence (\ref{thm5.5}.1) implies that: 
\begin{equation} \tag{\ref{thm5.5}.2}
\q^R(M,N) \ge \depth(R_\fp)-\depth_{R_{\fp}}(M_\fp) -\depth_{R_{\fp}}(N_\fp).
\end{equation}
So $\q^R(M,N) \ge \depth(R_\fq)-\depth_{R_{\fq}}(M_\fq) -\depth_{R_{\fq}}(N_\fq)$ for each $\fq \in \Spec(R)$; see (\ref{thm5.5}.2). Consequently we conclude that
\begin{align}\tag{\ref{thm5.5}.3}
\q^R(M,N) & \ge \sup \Bigl\{ \depth(R_\fp)-\depth_{R_\fp}(M_\fp)-\depth_{R_\fp}(N_\fp) \mid \fp\in\Spec(R)\Bigr\}
 \\ & \geq \notag{}\sup\Bigl\{ \depth(R_{\fp})-\depth_{R_\fp}(M_\fp) -\depth_{R_\fp}(N_\fp) \mid \fp \in \Ass_R\big(\Tor^R_{\q^R(M,N)}(M,N)\big)\Bigr\}.
\end{align}

Next let $\fp\in \Ass\Big(\Tor^R_{\q^R(M,N)}(M,N)\Big)$. Then $\depth_{R_\fp}\Big(\Tor^{R_{\fp}}_{\q^R(M,N)}(M_\fp,N_\fp)\Big)=0$. As $\depth_R(0)=\infty$, it follows that $\Tor^{R_{\fp}}_{\q^R(M,N)}(M_\fp,N_\fp)\neq 0$. Thus we deduce $\q^R(M,N)=\q^{R_\fp}(M_\fp,N_\fp)$. Set $Y=M_{\fp} \ltensor_{R_\fp} N_{\fp}$. Then $\sup(Y)=\q^{R_\fp}(M_\fp,N_\fp)<\infty$ and $\depth_{R_\fp}\Big(\rH_{\q^{R_\fp}(M_\fp,N_\fp)}(Y)\Big)=\depth_{R_\fp}\Big(\Tor^{R_{\fp}}_{\q^R(M,N)}(M_\fp,N_\fp)\Big)=0$. So \ref{rmk:ss} yields:
\begin{equation} \tag{\ref{thm5.5}.4}
\depth_{R_\fp} (M_\fp \ltensor_{R_\fp} N_\fp)=\depth_{R_\fp} (Y)=\depth_{R_\fp}\Big(\rH_{\q^{R_\fp}(M_\fp,N_\fp)}(Y)\Big)-\q^{R_\fp}(M_\fp,N_\fp).
\end{equation}

Recall, by Theorem \ref{mainthm}, we have $\depth_{R_{\fp}}(M_\fp)+\depth_{R_{\fp}}(N_\fp)=\depth(R_\fp)+\depth_{R_{\fp}}(M_\fp \ltensor_{R_\fp} N_\fp)$.
So we conclude from (\ref{thm5.5}.4) that 
\begin{equation} \tag{\ref{thm5.5}.5}
\q^R(M,N)=-\depth_{R_\fp} (M_\fp \ltensor_{R_\fp} N_\fp)=\depth(R_\fp)-\depth_{R_{\fp}}(M_\fp)-\depth_{R_{\fp}}(N_\fp).
\end{equation}
As the equality in (\ref{thm5.5}.5) is true for each associated prime ideal of $\Tor^R_{\q^R(M,N)}(M,N)$, we deduce that
\begin{equation} \tag{\ref{thm5.5}.6}
\q^R(M,N)=\sup\Bigl\{ \depth(R_{\fp})-\depth_{R_\fp}(M_\fp) -\depth_{R_\fp}(N_\fp) \mid \fp \in \Ass_R\big(\Tor^R_{\q^R(M,N)}(M,N)\big)\Bigr\}.
\end{equation}
Now (\ref{thm5.5}.3) and (\ref{thm5.5}.6) give the required conclusion.
\end{proof}


%


\section{Proofs of \ref{lem2.1intro}, \ref{p27}, \ref{rmkmain}, and \ref{rmk:ss}}

This section is devoted to the proofs of the preliminary results stated in section 2. Recall that $R$ denotes a commutative Noetherian local ring with unique maximal ideal $\fm$ and residue field $k$, and all $R$-modules are assumed to be finitely generated.
Moreover, we define an $R$-complex as a complex of finitely generated $R$-modules which is indexed homologically. 

\begin{proof}[Proof of \ref{lem2.1intro}] As part (i) follows by the depth lemma, we proceed with the proofs of parts (ii) and (iii).
We set $q = \q^R(M,N)$. The short exact sequence $0 \to M^{\oplus a} \to K \to \Omega_R^n M^{\oplus b} \to 0$ induces a long exact sequence in Tor modules, which implies the vanishing of $\Tor^R_{i}(K,N)$ for all $i\geq q+1$ and also yields the exact sequence:
\begin{equation}\tag{\ref{lem2.1intro}.1}
0 \to \Tor^R_q(M^{\oplus a},N) \to \Tor^R_q(K,N) \to \Tor^R_q(\Omega_R^nM^{\oplus b},N).
\end{equation}
Hence, as $\Tor^R_{q}(M,N)\neq 0$, it follows that $\Tor^R_q(K,N)\neq0$. This proves $q=\q^R(K,N)$.

Assume $n\geq 1$. Then $\Tor^R_q(\Omega_R^nM^{\oplus b},N)=0$. Hence (\ref{lem2.1intro}.1) yields $\Tor^R_q(M^{\oplus a},N) \cong \Tor^R_q(K,N)$ and shows that $\depth_R\big(\Tor^R_q(M,N)\big)=\depth_R\big(\Tor^R_q(K,N)\big)$, as required.

Next we assume $\depth_R\big( \Tor^R_q(M,N)\big)\leq 1$ and $n=0$. We observe that (\ref{lem2.1intro}.1) yields two exact sequences:
\begin{equation}\tag{\ref{lem2.1intro}.2}
0 \to \Tor^R_q(M^{\oplus a},N) \to \Tor^R_q(K,N) \to X \to 0 \text{ \;and\; }
0 \to X \to \Tor^R_q(M^{\oplus a},N)\label{eq:4.1.4}.
\end{equation}

If $\depth_R( \Tor^R_q(M,N))=0$, then the leftmost exact sequence in (\ref{lem2.1intro}.2) implies that the depth of $\Tor^R_q(K,N)$ is zero. Thus we may assume $\depth_R\big(\Tor^R_q(M,N)\big)=1$. In that case $\depth_R(X)\geq 1$ due to the rightmost exact sequence in (\ref{lem2.1intro}.2). So it follows that $$\depth_R\big(\Tor^R_q(K,N)\big)\geq \min\{ \depth_R\big(\Tor^R_q(M,N)\big), \depth_R(X) \}=1.$$ If $\depth_R\big(\Tor^R_q(K,N)\big)\geq 2=1+\depth_R\big(\Tor^R_q(M,N)\big)$, then the depth lemma applied to the leftmost exact sequence in (\ref{lem2.1intro}.2) implies that $\depth_R\big(\Tor^R_q(M,N)\big)=\depth_R(X)+1\geq 2$, which is not true. Consequently, we conclude that $\depth_R\big(\Tor^R_q(K,N)\big)=1=\depth_R\big(\Tor^R_q(M,N)\big)$.
\end{proof}


We make use of the following lemma for the proof of \ref{p27}.

\begin{lem}\label{lem4} Let $Y$ be a homologically bounded $R$-complex, $x \in \fm$, and let $K=(0\to R \xrightarrow{x}  R \to 0)$ be the Koszul complex on $x$. Then $\depth_R(Y \ltensor_R K) = \depth_R(Y) -1$.
\end{lem}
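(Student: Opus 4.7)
The plan is to realize the Koszul complex $K$ as the mapping cone of multiplication by $x$ on $R$, which produces the exact triangle $R \xrightarrow{x} R \to K \to R[1]$ in the derived category. Tensoring (derived) with $Y$ gives the exact triangle
\[
Y \xrightarrow{x} Y \to Y \ltensor_R K \to Y[1],
\]
identifying $Y \ltensor_R K$ with the cone of multiplication by $x$ on $Y$. I would then apply the triangulated functor $\rHom_R(k,-)$ to this triangle, since by \ref{cxp}(ii) depth is computed as $-\sup(\rHom_R(k,-))$.

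The resulting triangle
\[
\rHom_R(k, Y) \xrightarrow{x} \rHom_R(k, Y) \to \rHom_R(k, Y \ltensor_R K) \to \rHom_R(k, Y)[1]
\]
yields a long exact sequence in homology. The key observation, which I expect is the only nontrivial ingredient, is that multiplication by $x$ acts as zero on each $H_n(\rHom_R(k, Y)) \cong \Ext_R^{-n}(k, Y)$: by functoriality this map coincides with the map induced by multiplication by $x$ on the first argument $k$, and $x \in \fm$ annihilates $k$. Hence the long exact sequence collapses into short exact sequences
\[
0 \to H_n\bigl(\rHom_R(k, Y)\bigr) \to H_n\bigl(\rHom_R(k, Y \ltensor_R K)\bigr) \to H_{n-1}\bigl(\rHom_R(k, Y)\bigr) \to 0
\]
for every integer $n$.

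Setting $s = \sup(\rHom_R(k, Y))$, which is finite since $Y$ is homologically bounded, these short exact sequences show at once that $\sup(\rHom_R(k, Y \ltensor_R K)) = s + 1$. The conclusion $\depth_R(Y \ltensor_R K) = -(s+1) = \depth_R(Y) - 1$ then follows from the definition of depth recalled in \ref{cxp}(ii). The main (minor) obstacle is the verification that $x$ acts trivially on $\Ext_R^{\ast}(k, Y)$, but as indicated this is a standard functoriality statement, so no serious difficulty arises.
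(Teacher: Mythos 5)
Your proposal is correct and follows essentially the same route as the paper's proof: both realize $Y \ltensor_R K$ as the cone of multiplication by $x$ on $Y$, apply $\rHom_R(k,-)$, observe that $x$ annihilates $\Ext_R^\ast(k,Y)$ because $x \in \fm$ kills $k$, and read off the shift in $\sup$. The only cosmetic difference is that you package the collapsed long exact sequence as a family of short exact sequences and index homologically, while the paper works directly with the four-term sequences in $\Ext$; the mathematical content is identical.
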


\begin{proof} Note that we have the exact triangle $Y \xrightarrow{x} Y \to Y \ltensor_R K \to Y[1]$. Given $i\geq 0$, this exact triangle yields the following exact sequence of $R$-modules:
\begin{equation}\tag{\ref{lem4}.1}
 \Ext_R^{i-1}(k, Y)  \to \Ext_R^{i-1}(k, Y \ltensor_R K) \to \Ext_R^i(k, Y) \xrightarrow{x} \Ext_R^i(k, Y).
\end{equation}

Set $n = \depth_R(Y)$. Then, as the map on $\Ext_R^n(k, Y)$ given by multiplication by $x$ is zero, (\ref{lem4}.1) implies that $\Ext_R^{n-1}(k, Y \ltensor_R K) \cong \Ext_R^n(k, Y)$. So $\Ext_R^{n-1}(k, Y \ltensor_R K)\neq 0$. Also, if $i\leq n-2$, then (\ref{lem4}.1) shows that $\Ext_R^i(k, Y \ltensor_R K)=0$. 
Thus $\depth_R(Y \ltensor_R K)=-\sup \rHom_R(k,Y \ltensor_R K)=n-1$.
\end{proof}


\begin{proof}[Proof of  \ref{p27}]  We may assume $M$ and $N$ are nonzero. Hence, since $\q^R(M,N)<\infty$, depths of the complexes in parts (i) and (ii) are finite; see \ref{cxp}(ii) and \ref{lem2.1intro}(ii). Set $S=R/xR$. Then we have the following quasi-isomorphisms: 
\begin{align}
\notag{} (M\ltensor_R N)\ltensor_R S \simeq (M\ltensor_R S)\ltensor_R N & \simeq \left((M\ltensor_R S)\ltensor_S S\right)\ltensor_R N \\ \tag{\ref{p27}.1} & \notag{} \simeq (M\ltensor_R S)\ltensor_S (N\ltensor_R S) \\ & \notag{} \simeq (M/xM)\ltensor_S(N/xN)
\end{align}
Here the last quasi-isomorphism in (\ref{p27}.1) holds since $M\ltensor_R S \simeq M/xM \text{ and }  N\ltensor_R S \simeq N/xN$ due to the fact that $x$ is a non-zero divisor on $R$, $M$, and $N$.

Now (\ref{p27}.1) and Lemma \ref{lem4} yield
\begin{align}\tag{\ref{p27}.2}
\depth_R (M\ltensor_R N)-1= \depth_R\left((M\ltensor_R N)\ltensor_R S\right) & = \depth_R\left((M/xM)\ltensor_S(N/xN)\right) \\ & = \notag{} \depth_S\left((M/xM)\ltensor_S(N/xN)\right).
\end{align}
Here the last equality in (\ref{p27}.2) holds as $\depth_S(X)=\depth_R(X)$ for each $S$-complex $X$; see \cite[5.2(1)]{I}. Moreover, we know $\depth_S(M/xM)=\depth_R (M)-1$ and $\depth_S(N/xN) =\depth_R(N)-1$; see, for example, \cite[1.2.10(d)]{BH}. Consequently we use (\ref{p27}.2) and deduce
\begin{align}
\notag{}  \depth_R(M)+\depth_R(N) = & \depth(R)+\depth_R(M\ltensor_RN)  \Longleftrightarrow \\ 
\notag{}  \depth_R(M)-1 +\depth_R(N)-1 =  &\depth(R)-1+\depth_R(M\ltensor_RN)-1   \Longleftrightarrow \\
\notag{} \depth_{S}(M/xM)+\depth_{S}(N/xN) = & \depth(S)+\depth_{S}(M/xM\ltensor_{S}N/xN). 
\end{align}
This proves the equivalence of part (i) and part (ii).
\end{proof}


To prove \ref{rmkmain}, we first make an observation:

\begin{rmk} \label{212} Let $M$ and $N$ be $R$-modules such that $\q^R(M,N)<\infty$. 

Consider the syzygy short exact sequence $0 \to \Omega_R M \to F \to M \to 0$, where $F$ is a free $R$-module. This sequence yields the exact triangle
\begin{equation}\tag{\ref{212}.1}
\Omega_RM \ltensor_R N \to F\ltensor_R N \to M\ltensor_R N \to (\Omega_RM \ltensor_R N)[1].
\end{equation}
As $\depth_{R}(F\ltensor_R N)=\depth_R(N)$ and $\depth_R\left((M\ltensor_R N)[-1]\right)=\depth_R(M\ltensor_R N)+1$, the derived depth lemma \ref{cxp}(iv)(a) applied to (\ref{212}.1) yields
\[
\depth_R(\Omega_RM \ltensor_R N) \ge \min \big\{\depth_{R}(N), \depth_R(M\ltensor_R N)+1\big\}.
\]
This argument shows that, for each integer $n\geq 1$, we can use induction on $n$ and deduce:
\[
\depth_R(\Omega_R^n M \ltensor_R N) \ge \min\big\{\depth_R(N), \depth_R(M\ltensor_R N)+n\big\}.
\]
\end{rmk}

%
%

Now we can give a proof of \ref{rmkmain}.

\begin{proof}[Proof of \ref{rmkmain}] The given exact sequence $0 \to M^{\oplus a} \to K \to \Omega_R^n M^{\oplus b} \to 0$ yields the exact triangle:
\begin{equation} \tag{\ref{rmkmain}.1}
(M \ltensor_R N)^{\oplus a} \to K \ltensor_R N \to (\Omega_R^n M \ltensor_R N)^{\oplus b} \to (M \ltensor_R N)^{\oplus a}[1].
\end{equation}
Note, as $\q^R(M,N)<\infty$, depths of the complexes in (\ref{rmkmain}.1) are all finite; see \ref{cxp}(ii) and \ref{lem2.1intro}(ii). The derived depth lemma \ref{cxp}(iv)(b) applied to (\ref{rmkmain}.1) establishes the first inequality of the following:
\begin{align*}
\notag{} \depth_R(K \ltensor_R N) &\ge \min \{\depth_R(M \ltensor_R N),  \depth_R(\Omega_R^n M \ltensor_R N)\} \\
\tag{\ref{rmkmain}.2} &\ge \min \{\depth_R(M \ltensor_R N), \depth_R(N),  \depth_R(M \ltensor_R N) + n\} \\
\notag{} &=\min \{\depth_R(M \ltensor_R N), \depth_R(N)\}.
\end{align*}
Here, in (\ref{rmkmain}.2), the second inequality holds due to Remark \ref{212}.

As $\depth_R(M)\leq \depth(R)$, we have $\depth_R(K)=\depth_R(M)$ so that $\depth_R(K) \le \depth(R)$; see \ref{lem2.1intro}(i). Then, since the derived depth formula holds for $K$ and $N$, we obtain
\begin{equation} \tag{\ref{rmkmain}.3}
\depth_R(N) = \depth(R)-\depth_R(K) + \depth_R(K \ltensor_R N)\geq\depth_R(K \ltensor_R N).
\end{equation}

The derived depth lemma \ref{cxp}(iv)(a) applied to (\ref{rmkmain}.1) establishes the first inequality of:
\begin{align}
\notag{} \depth_R(M \ltensor_R N) &\ge \min \{ \depth_R(K \ltensor_R N), \depth_R(\Omega_R^n M \ltensor_R N)+ 1\} \\
\tag{\ref{rmkmain}.4} &\ge \min \{ \depth_R(K \ltensor_R N), \depth_R(M \ltensor_R N) + n + 1, \depth_R(N) + 1\}\\
\notag{} &= \depth_R(K \ltensor_R N).
\end{align}
Here, in (\ref{rmkmain}.4), we obtain the second inequality and the equality by using Remark \ref{212} and (\ref{rmkmain}.3), respectively. Now, since $\depth_R(K \ltensor_R N)$ cannot exceed both $\depth_R(N)$ and $\depth_R(M \ltensor_R N)$, we conclude that $\min\{\depth_R(M \ltensor_R N), \depth_R(N)\} \geq  \depth_R(K \ltensor_R N)$; see (\ref{rmkmain}.3) and  (\ref{rmkmain}.4). Therefore, in view of (\ref{rmkmain}.2), we see that $\depth_R(K \ltensor_R N)=\min\{\depth_R(M \ltensor_R N), \depth_R(N)\}$.
\end{proof}

\begin{proof}[Proof of \ref{rmk:ss}] It suffices to prove that $\depth_R\big(\rH_{s}(X)\big)-s \leq \depth_R\big(\rH_{i}(X)\big)-i$ for each integer $i$ with $i<s$; see \cite[2.3]{I}. If $i$ is such an integer, then $\depth_R\big(\rH_{s}(X)\big)\leq 1 \leq \depth_R\big(\rH_{i}(X)\big)+(s-i)$
since $\depth_R\big(\rH_{i}(X)\big)\geq 0$. Hence the result follows.
\end{proof}

\bibliography{a,b,c}
\bibliographystyle{plain}
\end{document}